\newcommand{\Cdb}{\mbox{$\mathbb{C}$}}
\newcommand{\Ddb}{\mbox{$\mathbb{D}$}}
\newcommand{\Rdb}{\mbox{$\mathbb{R}$}}
\newcommand{\Zdb}{\ensuremath{\mathbb{Z}}}
\newcommand{\norm}[1]{\Vert#1\Vert}
\newcommand{\bignorm}[1]{\bigl\Vert#1\bigr\Vert}
\newcommand{\Bignorm}[1]{\Bigl\Vert#1\Bigr\Vert}
\newtheorem{theorem}{Theorem}[section]
\newtheorem{lemma}[theorem]{Lemma}
\newtheorem{corollary}[theorem]{Corollary}
\newtheorem{proposition}[theorem]{Proposition}
\theoremstyle{remark}
\newtheorem{remark}[theorem]{\bf Remark}
\theoremstyle{definition}
\numberwithin{equation}{section}
\begin{document}

\title[]{Strong $q$-variation inequalities for analytic semigroups}

\author{Christian Le Merdy, Quanhua Xu}
\address{Laboratoire de Math\'ematiques\\ Universit\'e de  Franche-Comt\'e
\\ 25030 Besan\c con Cedex\\ France}
\email{clemerdy@univ-fcomte.fr}

\address{School of Mathematics and Statistics\\ Wuhan University\\ 
Wuhan 430072 \\ Hubei\\ China}

\address{Laboratoire de Math\'ematiques\\ Universit\'e de  Franche-Comt\'e
\\ 25030 Besan\c con Cedex\\ France}
\email{qxu@univ-fcomte.fr}

\date{\today}

\thanks{The authors are both supported by the research program ANR-06-BLAN-0015}

\begin{abstract}Let $T\colon L^p(\Omega)\to L^p(\Omega)$ be a positive contraction,
with $1<p<\infty$. Assume that $T$ is analytic, that is, there exists a constant $K\geq 0$ such that $\norm{T^n-T^{n-1}}\leq K/n$ for any integer $n\geq 1$. Let $2<q<\infty$ and 
let $v^q$ be the space of all complex sequences with a finite strong $q$-variation. We show that for
any $x\in L^p(\Omega)$, the sequence $\bigl([T^n(x)](\lambda)\bigr)_{n\geq 0}$
belongs to $v^q$ for almost every $\lambda\in\Omega$, with an estimate
$\norm{ (T^n(x))_{n\geq 0}}_{L^p(v^q)}\leq C\norm{x}_p$. If we remove the
analyticity assumption, we obtain an estimate
$\norm{(M_n(T)x)_{n\geq 0}}_{L^p(v^q)}\leq C\norm{x}_p$, where
$M_n(T)=(n+1)^{-1}\sum_{k=0}^{n} T^k\,$ denotes the ergodic averages of $T$.
We also obtain similar results for strongly continuous semigroups 
$(T_t)_{t\geq 0}$ of positive contractions on $L^p$-spaces.
\end{abstract}

\maketitle

\bigskip\noindent
{\it 2000 Mathematics Subject Classification : 47A35, 37A99, 47B38}

\bigskip

\section{Introduction.}

Variational inequalities in probability, ergodic theory and harmonic analysis
have been the subject of many recent research papers.  One important character of these
inequalities is the fact that they can be used to measure the speed of convergence
for the family of operators in consideration. To be more precise, consider, for instance,
a measure space $(\Omega,\mu)$ and  an operator  $T$ on $L^1(\Omega)+ L^\infty(\Omega)$.  
Form the ergodic averages of $T$:
 $$
M_n(T)\,=\,\frac{1}{n+1}\,\sum_{k=0}^{n} T^k\,,\qquad n\geq 0.
$$
A fundamental theorem  in ergodic theory states that if $T$ is a contraction on
$L^p(\Omega)$ for every $1\le p\le\infty$, then the limit
$\lim_{n\to\infty}M_n(T)x$ exists a.e. for every $x\in L^p(\Omega)$.
One can naturally ask what is the speed of convergence of this limit.
A classical tool for measuring that speed is the following square function
$$
S(x)=\Bigl(\sum_{n\ge0} n\bigl|M_{n+1}(T)x -M_n(T)x \bigr|^2\Bigr)^{\frac12},
$$
the problem being to estimate its norm $\|S(x)\|_p$. This issue goes back to Stein \cite{S2},
who proved that if $T$ as above is positive on $L^2(\Omega)$ (in the Hilbertian sense),
then a square function inequality
$$
\|S(x)\|_p\le C_p\|x \|_p,\quad x\in L^p(\Omega),
$$
holds for any $1<p<\infty$.
Stein's inequality is closely related to Dunford-Schwartz's maximal ergodic inequality,
$$
\big\|\sup_{n\ge0}|M_n(T)x|\,\big\|_p\le C_p\|x\|_p,\quad x\in L^p(\Omega), \; 1<p\le\infty.
$$
This maximal inequality and its weak type $(1,1)$ substitute for $p=1$ are key
ingredients in the proof of the previous pointwise ergodic theorem.

The strong $q$-variation is another (better) tool to measure
the speed of $\lim_{n\to\infty}M_n(T)x$.
Bourgain was the first to consider variational inequalities in ergodic theory.
To state his inequality we need to recall the definition of the strong $q$-variation.
Given a sequence $(a_n)_{n\geq 0}$ of complex numbers and a number $1\leq q<\infty$,
the strong $q$-variation norm is defined as
$$
\norm{(a_n)_{n\geq 0}}_{v^q}\,=\, \sup\Bigl\{\bigl(\vert a_0\vert^q\, +\,\sum_{k\geq 1}
\vert a_{n_k}-a_{n_{k-1}}\vert^q \bigr)^{\frac{1}{q}}\Bigr\},
$$
where the supremum runs over all increasing sequences $(n_k)_{k\geq 0}$
of integers such that $k_0=0$.
It is clear that the set $v^q$ of all sequences with a finite strong $q$-variation is a Banach space
for the norm $\norm{\ }_{v^q}$.

Bourgain \cite{B} proved that if $T$ is induced by a measure preserving transformation
on $(\Omega,\mu)$, then for any $ 2<q<\infty$,
$$
\bignorm{(M_n(T)x)_{n\geq 0}}_{L^2(v^q)}\,
\leq\, C_{q}\,\norm{x}_2, \quad x\in L^2(\Omega).
$$
This inequality was then extended to $L^p(\Omega)$ for any $1<p<\infty$ by Jones,
Kaufman, Rosenblatt and Wierdl \cite{JKRW}. The latter paper contains many other
interesting results on the subject. Note that the predecessor of Bourgain's inequality
is L\'epingle's variational inequality for martingales \cite{Le}. The latter says
that if $(\mathbb E_n)_{n\geq 0}$ is an increasing sequence of conditional expectations
on a probability space $\Omega$, then we have an estimate
$\bignorm{(\mathbb E_n(x))_{n\geq 0}}_{L^p(v^q)}\,
\leq\, C\,\norm{x}_p$
(see also \cite{PX2} for further results on that theme).

Since \cite{B}, variational type inequalities have been extensively studied
in ergodic theory and harmonic analysis. Many classical sequences of operators
and semigroups have been proved to satisfy strong variational bounds, see in particular
\cite{CMMTV, JR, JSW, JW, OSTTW} and references therein.
The main purpose of this paper is to exhibit a large class of operators
$T$ on $L^p(\Omega)$ for a fixed $1<p<\infty$ with the following property:
for any $2<q<\infty$, there exists a constant $C>0$ (which may depend on $q$ and $T$)
such that
for any $x\in L^p(\Omega)$, the sequence $(T^n(x))_{n\geq 0}$
belongs to $L^p(\Omega;v^q)$, and
\begin{equation}\label{1q1}
\bignorm{(T^n(x))_{n\geq 0}}_{L^p(v^q)}\,
\leq\, C\,\norm{x}_p.
\end{equation}
We show that this holds true provided that $T$ is a positive contraction (more generally,
a contractively regular operator) and $T$ is analytic, in the sense that
$$
\sup_{n\geq 1} n\norm{T^n-T^{n-1}}\,<\,\infty.
$$

Inequality  \eqref{1q1} implies, of course, a similar variational inequality
for the ergodic averages $M_n(T)$. However, in this latter case,
the analytic assumption above can be removed.
Namely, for a positive contraction $T$ on $L^p(\Omega)$ with $1<p<\infty$ we have
an estimate
\begin{equation}\label{1q3}
\bignorm{(M_n(T)x)_{n\geq 0}}_{L^p(v^q)}\,\leq\,
C\,\norm{x}_p,\qquad x\in L^p(\Omega),
\end{equation}
for any $2<q<\infty$. This result extends those of \cite{B} and \cite{JKRW} quoted previously.

Note that inequality \eqref{1q1} for positive analytic contractions 
considerably improves our previous maximal ergodic inequality for such operators $T$ proved in \cite{LMX}. 
In this sense, this paper is a continuation of \cite{LMX}. On the other hand,  
our proof of \eqref{1q1} heavily relies on the square function inequality of  \cite{LMX}.

We also establish results similar to (\ref{1q1}) and (\ref{1q3}) 
for strongly continuous semigroups.
This requires the following continuous analog of $v^q$.
Given a complex family $(a_t)_{t>0}$, define
$$
\norm{(a_t)_{t>0}}_{V^q}\,=\, \sup\Bigl\{\bigl(\vert a_{t_0}\vert^q\, +\,\sum_{k\geq 1}
\vert a_{t_k}-a_{t_{k-1}}\vert^q \bigr)^{\frac{1}{q}}\Bigr\},
$$
where the supremum runs over all increasing sequences $(t_k)_{k\geq 0}$
of positive real numbers. Then we let
$V^q$ be the resulting Banach space of
all $(a_t)_{t>0}$ such that
$\norm{(a_t)_{t> 0}}_{V^q}\,<\,\infty\,$.

Consider a bounded analytic semigroup $(T_t)_{t\geq 0}$ on $L^p(\Omega)$, with
$1<p<\infty$. We show that if $T_t$ is a positive contraction (more generally,
a contractively regular operator) for any $t\geq 0$, then for any $2<q<\infty$
and any $x\in L^p(\Omega)$,
the family $\bigl([T_t(x)](\lambda)\bigr)_{t>0}$ belongs to $V^q$ for almost every $\lambda\in
\Omega$, and we have an estimate
\begin{equation}\label{1q2}
\Bignorm{\lambda\mapsto \bignorm{\bigl([T_t(x)](\lambda)\bigr)_{t>0}}_{V^q}}_p
\,\leq\,C\,\norm{x}_p,\qquad x\in L^p(\Omega).
\end{equation}

We mention that Jones and Reinhold \cite{JR} proved  (\ref{1q2}) for positive, unital
symmetric diffusion semigroups and (\ref{1q1}) for a certain class of convolution
operators (only in the case $p=2$). Our results turn out
to extend these contributions in various directions. 

As in the discrete case, we obtain similar results for the averages 
of the semigroup if we remove the analyticity assumption.

Inequalities for ergodic averages, such as (\ref{1q3}), will be established in Section 3.
Then our main results leading to (\ref{1q1}) and (\ref{1q2}) will be established in
Section 4, using the above mentioned results from Section 3, as well as square
function estimates from \cite{LMX}.

Section 5 is devoted to various complements. On the one hand, we establish individual (= pointwise)
ergodic theorems in our context. For example, if $T\colon L^p(\Omega)\to L^p(\Omega)$
is a positive analytic contraction, then $\lim_{n\to\infty} T^n(x)$ exists a.e. for
every $x\in L^p(\Omega)$. On the other hand,
it is well-known that (\ref{1q3}) cannot be extended
to the case $q=2$. Then we show analogs of (\ref{1q3}) and (\ref{1q1}) when $v^q$
is replaced by the oscillation space $o^2$. We give examples and applications
in Section 6.

\bigskip
We end this introduction with a few notation. If $X$ is a Banach space, we let
$B(X)$ denote the algebra of all bounded operators on $X$ and we let
$I_X$ denote the identity operator on $X$ (or simply $I$ if there is no ambiguity
on $X$). For any $T\in B(X)$, we let
$\sigma(T)$ denote the spectrum of $T$. Also we let
$\Ddb=\{z\in\Cdb\, :\, \vert z\vert <1\}$ be the open unit disc of $\Cdb$.

For a measurable function $x\colon\Omega\to\Cdb$ acting on
a measure space $(\Omega,\mu)$, $\norm{x}_p$ denotes the $L^p$-norm of $x$.

We refer to \cite{Pa} and \cite{Go} for background on strongly continuous
and analytic semigroups on Banach space.

\medskip
\section{Preliminaries on $q$-variation.}

The aim of this section is to provide some elementary background on the spaces $v^q$ and
$V^q$ defined above. We fix some $1\leq q<\infty$.

Let $(a_n)_{n\geq 0}$ be an element of $v^q$. Then the sequence $(a_n)_{n\geq 0}$ is bounded, with
$$
\norm{(a_n)_{n\geq 0}}_{\infty} \leq 2
\norm{(a_n)_{n\geq 0}}_{v^q},
$$
and
$$
\lim_{m\to \infty}\bignorm{(0,\ldots,0,a_{m+1} - a_m,\ldots, a_n-a_m,\ldots)}_{v^q}\,=0.
$$
Thus the space of eventually constant sequences is dense in $v^q$.
Consequently any element of $v^q$ is a converging sequence.

For any integer $m\geq 1$, let $v_m^q$ be the space of $(m+1)$-tuples $(a_0,a_1,\ldots,a_m)$
of complex numbers, equipped with the norm
$$
\bignorm{(a_0,a_1,\ldots,a_m)}_{v^q_m}\,=\,\bignorm{(a_0,a_1,\ldots,a_m,a_m,\ldots)}_{v^q}.
$$
It follows from above that an infinite sequence $(a_n)_{n\geq 0}$ belongs to $v^q$ if and only if there is a constant
$C\geq 0$ such that  $\bignorm{(a_0,a_1,\ldots,a_m)}_{v^q_m}\,\leq C$ for any $m\geq 1$ and in this case,
\begin{equation}\label{6Approx}
\norm{(a_n)_{n\geq 0}}_{v^q}\,=\,\lim_{m\to \infty} \bignorm{(a_0,a_1,\ldots,a_m)}_{v^q_m}.
\end{equation}

Let $(\Omega,\mu)$ be a measure space, let
$1<p<\infty$ and let $L^p(\Omega;v^q)$ denote the
corresponding Bochner space. Any element of that space can be naturally regarded
as a sequence of $L^p(\Omega)$. The following is a direct consequence
of the above approximation properties.

\begin{lemma}\label{6Lem1}
Let $(x_n)_{n\geq 0}$ be a sequence of $L^p(\Omega)$, the following assertions
are equivalent.
\begin{itemize}
\item [(i)]
The sequence $(x_n)_{n\geq 0}$ belongs to $L^p(\Omega;v^q)$.
\item [(ii)]
The sequence $(x_n(\lambda))_{n\geq 0}$ belongs to $v^q$ for almost every $\lambda\in\Omega$ and the
function $\lambda\mapsto \norm{(x_n(\lambda))_{n\geq 0}}_{v^q}$ belongs to $L^p(\Omega)$.
\item [(iii)]
There is a constant $C\geq 0$ such that
$$
\bignorm{(x_0,x_1,\ldots,x_m)}_{L^p(v_m^q)}\,\leq C
$$
for any $m\geq 1$.
\end{itemize}
In this case,
$$
\bignorm{\lambda\mapsto \norm{(x_n(\lambda))_{n\geq 0}}_{v^q}}_p\,=\,
\norm{(x_n)_{n\geq 0}}_{L^p(v^q)}\,=\,
\lim_{m\to \infty} \bignorm{(x_0,x_1,\ldots,x_m)}_{L^p(v^q_m)}.
$$
\end{lemma}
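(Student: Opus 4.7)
The plan is to build everything from the finite-dimensional approximations $v^q_m$, and then derive the three equivalences essentially from a monotone convergence argument plus a density argument to handle Bochner measurability.

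First I would establish the key pointwise and scalar-level facts. For fixed $m$, the map
$\lambda\mapsto \bignorm{(x_0(\lambda),\dots,x_m(\lambda))}_{v^q_m}$
is measurable, since it is the supremum of finitely many measurable expressions of the form
$(\vert x_0(\lambda)\vert^q+\sum_{k\geq 1}\vert x_{n_k}(\lambda)-x_{n_{k-1}}(\lambda)\vert^q)^{1/q}$
indexed by finite increasing tuples in $\{0,\dots,m\}$. Next, I would observe that the quantity
$\bignorm{(a_0,\dots,a_m)}_{v^q_m}$
is non-decreasing in $m$: any increasing tuple giving a value close to the supremum for $(a_0,\dots,a_m,a_m,\dots)$ can be transplanted verbatim into the one-index-longer sequence $(a_0,\dots,a_{m+1},a_{m+1},\dots)$. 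Combined with \eqref{6Approx}, this gives
$$
\bignorm{(x_n(\lambda))_{n\geq 0}}_{v^q}\,=\,\lim_{m\to\infty}\bignorm{(x_0(\lambda),\dots,x_m(\lambda))}_{v^q_m}\,\in\,[0,+\infty]
$$
as a monotone limit, hence the left-hand side is a measurable $[0,+\infty]$-valued function of $\lambda$.

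With that in hand, the equivalence (ii)$\Leftrightarrow$(iii) is a straightforward application of the monotone convergence theorem: raising the last displayed equation to the $p$-th power and integrating gives
$$
\Bignorm{\lambda\mapsto\bignorm{(x_n(\lambda))_{n\geq 0}}_{v^q}}_p\,=\,\lim_{m\to\infty}\bignorm{(x_0,x_1,\ldots,x_m)}_{L^p(v^q_m)},
$$
so finiteness of the $L^p$-norm in (ii) is equivalent to a uniform bound on the truncated norms in (iii), with equality of the two quantities. This already yields the second and third terms of the claimed chain of equalities.

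The remaining and slightly subtler part is (i)$\Leftrightarrow$(ii), where the main obstacle is that $v^q$ is not separable, so Bochner strong measurability of $\lambda\mapsto(x_n(\lambda))_{n\geq 0}$ does not come for free. Here I would exploit the density fact recalled before the lemma: for any $a=(a_n)_{n\geq 0}\in v^q$, the truncations $(a_0,\dots,a_m,a_m,a_m,\dots)$ converge to $a$ in $v^q$-norm. Set
$f_m(\lambda)\,=\,(x_0(\lambda),\dots,x_m(\lambda),x_m(\lambda),\ldots)\,\in v^q.$
Each $f_m$ takes its values in the finite-dimensional (hence separable) subspace of eventually constant sequences stabilizing after index $m$, and on that subspace the $v^q$-norm is equivalent to any vector-space norm on $\Cdb^{m+1}$; since the scalar coordinates $x_0,\dots,x_m$ are measurable, $f_m$ is strongly (Bochner) measurable as a $v^q$-valued function. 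Assuming (ii), the convergence $f_m(\lambda)\to(x_n(\lambda))_{n\geq 0}$ in $v^q$ holds at every $\lambda$ for which the target sequence lies in $v^q$, i.e.\ a.e.\ on $\Omega$. Hence $\lambda\mapsto(x_n(\lambda))_{n\geq 0}$ is a.e.\ limit of strongly measurable functions, so it is itself strongly measurable, and then (ii) together with the $L^p$-integrability of its norm gives (i). The reverse implication (i)$\Rightarrow$(ii) is immediate from the definition of $L^p(\Omega;v^q)$, which then also yields the first equality in the final displayed formula of the lemma.
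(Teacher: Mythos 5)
Your proof is correct and takes essentially the approach the paper intends: the paper gives no written proof, stating Lemma \ref{6Lem1} as a direct consequence of the approximation properties (the bound $\norm{(a_n)}_\infty\leq 2\norm{(a_n)}_{v^q}$, the density of eventually constant sequences, and \eqref{6Approx}), and your argument simply supplies the details — measurability and monotonicity of the truncated norms, monotone convergence for (ii)$\Leftrightarrow$(iii), and truncation to obtain strong measurability despite the non-separability of $v^q$. Nothing to correct.
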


\bigskip
We now consider the continuous case. We note for further use that any element $(a_t)_{t>0}$ of $V^q$ admits limits
$$
\lim_{t\to 0^+} a_t\qquad\hbox{and}\qquad \lim_{t\to\infty} a_t.
$$
The following is a continuous analog of Lemma \ref{6Lem1}. Note however that families satisfying the next statement
do not necessarily belong to the Bochner space $L^p(\Omega; V^q)$.

\begin{lemma}\label{6Lem2}
Let $(x_t)_{t>0}$ be a family of $L^p(\Omega)$ and assume that:
\begin{itemize}
\item [(1)] For a.e. $\lambda\in\Omega$, the function $t\mapsto x_t(\lambda)$ is continuous on $(0,\infty)$.
\item [(2)] There exists a constant $C\geq 0$ such that whenever $t_0<t_1<\cdots<t_m$
is a finite increasing sequence of positive real numbers, we have
$$
\bignorm{(x_{t_0},x_{t_1},\ldots,x_{t_m})}_{L^p(\Omega; v^q_m)}\,\leq C.
$$
\end{itemize}
Then $(x_t(\lambda))_{t>0}$ belongs to $V^q$ for a.e. $\lambda\in\Omega$, the function
$\lambda\mapsto \bignorm{(x_t(\lambda))_{t>0}}_{V^q}$ belongs to $L^p(\Omega)$ and
$$
\bignorm{\lambda\mapsto \norm{(x_t(\lambda))_{t>0}}_{V^q}}_{p}\,\leq C.
$$
\end{lemma}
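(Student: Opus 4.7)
The plan is to reduce the continuous supremum defining $\|(x_t(\lambda))_{t>0}\|_{V^q}$ to a countable one by using the continuity hypothesis, and then to exchange the resulting pointwise supremum with the $L^p$-norm via monotone convergence.

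First, fix a countable dense set $D\subset(0,\infty)$, say $D=\Qdb\cap(0,\infty)$, and let $\Omega_0\subset\Omega$ be the full measure subset on which $t\mapsto x_t(\lambda)$ is continuous (from hypothesis~(1)). For $\lambda\in\Omega_0$, I claim that
$$
\bignorm{(x_t(\lambda))_{t>0}}_{V^q}\,=\,\sup\Bigl\{\bignorm{(x_{t_0}(\lambda),\ldots,x_{t_m}(\lambda))}_{v^q_m}\,:\, m\geq 0,\ t_0<\cdots<t_m,\ t_i\in D\Bigr\}.
$$
The inequality ``$\geq$'' is immediate. For ``$\leq$'', given any $t_0<\cdots<t_m$ in $(0,\infty)$, one can approximate each $t_i$ by a rational $t_i^{(k)}\in D$ so that $t_0^{(k)}<\cdots<t_m^{(k)}$ eventually, and continuity of $t\mapsto x_t(\lambda)$ together with the fact that $\|\cdot\|_{v^q_m}$ is a continuous function of its $m+1$ entries yields convergence of the $v^q_m$-norms.

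Next, enumerate finite subsets of $D$ by taking an increasing sequence $F_1\subset F_2\subset\cdots$ of finite subsets of $D$ with $\bigcup_n F_n=D$. Write $F_n=\{s_0^{(n)}<s_1^{(n)}<\cdots<s_{k_n}^{(n)}\}$ and put
$$
V_n(\lambda)\,=\,\bignorm{\bigl(x_{s_0^{(n)}}(\lambda),\ldots,x_{s_{k_n}^{(n)}}(\lambda)\bigr)}_{v^q_{k_n}}.
$$
Since every increasing subsequence in $F_n$ is an increasing subsequence in $F_{n+1}$, and since the $v^q_{k}$-norm of $(a_0,\ldots,a_k)$ is the supremum over all increasing subsequences of its entries (the extension by the last value being irrelevant for computing that supremum), the sequence $V_n(\lambda)$ is nondecreasing in $n$, and
$$
\lim_{n\to\infty} V_n(\lambda)\,=\,\sup\Bigl\{\bignorm{(x_{t_0}(\lambda),\ldots,x_{t_m}(\lambda))}_{v^q_m}\,:\,t_0<\cdots<t_m\text{ in }D\Bigr\}
$$
for $\lambda\in\Omega_0$; by the previous paragraph this equals $\|(x_t(\lambda))_{t>0}\|_{V^q}$.

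Finally, each $V_n$ is a measurable function on $\Omega$ (it is a Borel function of finitely many elements of $L^p(\Omega)$), and by hypothesis~(2) applied to the finite sequence $F_n$ we have $\|V_n\|_p\leq C$ for every $n$. The monotone convergence theorem yields
$$
\Bignorm{\lambda\mapsto \bignorm{(x_t(\lambda))_{t>0}}_{V^q}}_p\,=\,\lim_{n\to\infty}\|V_n\|_p\,\leq\,C,
$$
and in particular the pointwise supremum is finite almost everywhere, so $(x_t(\lambda))_{t>0}\in V^q$ for a.e.\ $\lambda\in\Omega$. The only genuinely delicate point is the reduction to a countable supremum; once this is justified via continuity and density, measurability and the $L^p$-bound follow by an immediate monotone convergence argument.
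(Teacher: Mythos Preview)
Your argument is correct and follows essentially the same route as the paper's: reduce the $V^q$-supremum to a countable one via the continuity hypothesis and a dense set of times, then pass to the limit by monotone convergence. The paper's version uses the nested dyadic grids $\{n2^{-N}:n\geq 1\}$ together with an explicit $\sup_{k\geq 1}$ over the starting index, which is what cleanly guarantees the monotonicity step; under the paper's convention the $v^q_m$-norm pins the first term to $|a_0|$, so your $V_n$ need not be literally nondecreasing when $\min F_{n+1}<\min F_n$ (e.g.\ $q=2$, $F_1=\{1\}$, $F_2=\{\tfrac12,1\}$, $x_1=2$, $x_{1/2}=1$ gives $V_1=2>\sqrt2=V_2$) --- a cosmetic point, easily repaired by the same device or by noting that allowing an arbitrary starting index changes the $v^q$-norm by at most a factor of~$2$.
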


\begin{proof}
For any integer $N\geq 1$, define
$$
\varphi_N(\lambda) \,=\, \sup_{k\geq 1}\bignorm{\bigl(x_{n2^{-N}} (\lambda)\bigr)_{n\geq k}}_{v^q},
\qquad \lambda\in \Omega.
$$
It follows from (\ref{6Approx}) that $\varphi_N$ is measurable. Moreover the sequence
$(\varphi_N)_{N\geq 1}$ is nondecreasing, and we may therefore define
$$
\varphi(\lambda)=\lim_{N\to\infty}\varphi_N(\lambda), \qquad \lambda\in \Omega.
$$
By construction, $\varphi$ is measurable and by the monotone convergence theorem,
its $L^p$-norm is equal to $\lim_N\norm{\varphi_N}_p$.
According to the assumption (2) and the approximation property
(\ref{6Approx}), the $L^p$-norm of $\varphi_N$ is $\leq C$ for any
$N\geq 1$. Hence
$$
\int_{\Omega}\varphi(\lambda)^p\, d\mu(\lambda)\ \leq\, C^p.
$$
This implies that $\varphi(\lambda)<\infty$ for a.e. $\lambda\in\Omega$.

If $\lambda\in \Omega$ is such that $t\mapsto x_t(\lambda)$ is continuous on $(0,\infty)$, then
$$
\varphi(\lambda)=\bignorm{(x_t(\lambda))_{t>0}}_{V^q}.
$$
According to the assumption (1), this holds true almost everywhere. Hence
the quantity $\bignorm{(x_t(\lambda))_{t>0}}_{V^q}$ is finite for a.e. $\lambda\in\Omega$.
The lemma clearly follows from these properties.
\end{proof}

\medskip
\section{Variation of ergodic averages.}

Throughout we let $(\Omega,\mu)$ be a measure space and we let $1<p<\infty$.
We first recall the notion of regular operators on $L^p(\Omega)$ and some of their basic properties
which will be used in this paper. We refer e.g. to \cite[Chap. 1]{MN} and to \cite{Pe,Pi}
for more details and complements.

An operator $T\colon L^p(\Omega)\to L^p(\Omega)$  is called regular if there is a constant
$C\geq 0$ such that
$$
\bignorm{\sup_{k\geq 1}\vert T(x_k)\vert}_p\,\leq\, C\bignorm{\sup_{k\geq 1}\vert x_k\vert}_p
$$
for any finite sequence $(x_k)_{k\geq 1}$ in $L^p(\Omega)$. Then we let $\norm{T}_r$ denote the
smallest $C$ for which this holds. The set of all regular operators on $L^p(\Omega)$ is a
vector space on which $\norm{\ }_r$ is a norm.
We say that $T$ is contractively regular if $\norm{T}_r\leq 1$.

Let $E$ be a Banach space. If an operator $T\colon L^p(\Omega)\to
L^p(\Omega)$ is regular, then the operator $T\otimes I_E\colon L^p(\Omega)\otimes E\to
L^p(\Omega)\otimes E$ extends to a bounded operator on the Bochner space $L^p(\Omega; E)$, and
\begin{equation}\label{2Reg}
\bignorm{T\otimes I_E\colon L^p(\Omega;E)\longrightarrow L^p(\Omega;E)}\,\leq\,\norm{T}_r.
\end{equation}
Indeed by definition this holds true when $E=\ell^{\infty}_n$ for any $n\geq 1$, and the general
case follows from the fact that for any $\varepsilon >0$,
any finite dimensional Banach space is $(1+\varepsilon)$-isomorphic to
a subspace of $\ell^\infty_n$ for some large enough $n\geq 1$.

Any positive operator $T$ (in the lattice sense) is regular and $\norm{T}_r=\norm{T}$ in this case.
Thus all statements given for
contractively regular operators apply to positive contractions. It is well-known that conversely,
$T$ is regular with $\norm{T}_r\leq C$ if and only if there is a positive operator
$S\colon L^p(\Omega)\to L^p(\Omega)$ with $\norm{S}\leq C$, such that
$\vert T(x)\vert \leq S(\vert x\vert)$ for any $x\in L^p(\Omega)$.

Finally, following \cite{Pe}, we say that an operator $T\colon L^{1}(\Omega)+L^{\infty}(\Omega) \to
L^{1}(\Omega)+L^{\infty}(\Omega)$ is an absolute contraction if it induces two contractions
$$
T\colon L^{1}(\Omega) \longrightarrow
L^{1}(\Omega) \qquad\hbox{and}\qquad T\colon L^{\infty}(\Omega) \longrightarrow
L^{\infty}(\Omega).
$$
Then the resulting operator $T\colon L^{p}(\Omega) \to
L^{p}(\Omega)$ is contractively regular.

The main result of this section is the following theorem, which might be known to experts.
Its proof relies on the transference principle, already used in \cite{B}.

\begin{theorem}\label{2Main} Let $T\colon L^p(\Omega)\to L^p(\Omega)$ be a contractively regular operator,
with $1<p<\infty$, and let $2<q<\infty$. Then we have
$$
\bignorm{\bigl(M_n(T)x\bigr)_{n\geq 0}}_{L^p(v^q)}\,\leq\,C_{p,q}\,\norm{x}_p,\qquad x\in L^p(\Omega),
$$
for some constant $C_{p,q}$ only depending on $p$ and $q$.
\end{theorem}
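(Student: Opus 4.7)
The strategy is to combine a dilation theorem with the variational inequality for measure-preserving transformations established by Bourgain \cite{B} and Jones-Kaufman-Rosenblatt-Wierdl \cite{JKRW}, who proved the conclusion precisely in the case $T=U_\tau$ with $\tau$ measure-preserving.

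Concretely, I would first invoke a dilation theorem of Akcoglu-Coifman-Weiss-Fendler type, applicable because $T$ is contractively regular: there exist a measure space $(\tilde\Omega,\tilde\mu)$, an invertible measure-preserving transformation $\tau$ on $\tilde\Omega$, with associated isometry $U\colon f\mapsto f\circ\tau$ on $L^p(\tilde\Omega)$, together with contractively regular maps $J\colon L^p(\Omega)\to L^p(\tilde\Omega)$ and $Q\colon L^p(\tilde\Omega)\to L^p(\Omega)$, such that
$$
T^n=QU^nJ,\qquad n\geq 0.
$$
Averaging yields $M_n(T)=QM_n(U)J$ for every $n$.

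Next, I would appeal to the Bochner extension property (2.1): since $\norm{Q}_r\leq 1$ and $\norm{J}_r\leq 1$, both operators induce contractions $Q\otimes I_{v^q}$ and $J\otimes I_{v^q}$ on the corresponding $v^q$-valued $L^p$-spaces, acting coordinate-wise on sequences. Regarding $(M_n(T)x)_{n\geq 0}$ as the image under $Q\otimes I_{v^q}$ of $(M_n(U)Jx)_{n\geq 0}$, this gives
$$
\bignorm{(M_n(T)x)_{n\geq 0}}_{L^p(\Omega;v^q)}\,\leq\,
\bignorm{(M_n(U)Jx)_{n\geq 0}}_{L^p(\tilde\Omega;v^q)}.
$$
Finally, since $U$ is induced by a measure-preserving transformation, the JKRW variational inequality bounds the right-hand side by $C_{p,q}\norm{Jx}_p\leq C_{p,q}\norm{x}_p$, completing the argument.

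The principal obstacle lies in securing the right dilation in the contractively regular setting. Akcoglu's classical theorem provides, for any positive contraction on $L^p$, an invertible positive isometry dilation; Lamperti's representation then shows such an isometry is essentially of the form $f\mapsto h\,f\circ\tau$ with $\tau$ measure-preserving and $\vert h\vert=1$, the unimodular multiplier being absorbed harmlessly by $J$ and $Q$. For the full contractively regular case, one needs Fendler's refinement to guarantee the dilating operator genuinely comes from a measure-preserving transformation on a $\sigma$-finite space compatible with JKRW's framework. An alternative route would be to apply the Calder\'on-Coifman-Weiss transference principle directly to transfer the variational inequality from the shift on $\Zdb$, but this in turn relies on the same dilation to supply the underlying $\Zdb$-action.
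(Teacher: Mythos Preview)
Your overall architecture---dilate, then invoke a known variational inequality for the dilated operator---matches the paper's. The gap is in the second step. The Akcoglu/Peller/Coifman--Rochberg--Weiss dilation yields only an invertible isometry $U$ on $L^p(\widehat\Omega)$ with $\norm{U^j}_r=1$ for all $j\in\Zdb$; it does \emph{not} produce an operator induced by a measure-preserving transformation. Your Lamperti step is where this slips: for $p\neq 2$, Lamperti's theorem gives $Uf=h\cdot(f\circ\tau)$ with $\tau$ a nonsingular transformation and $|h|^p$ equal to the Radon--Nikodym derivative $d(\mu\circ\tau^{-1})/d\mu$, so $|h|=1$ holds precisely when $\tau$ is already measure-preserving---the very thing you are asserting. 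For $p=2$ not even this Lamperti form is available. Hence the JKRW theorem, stated for measure-preserving maps, does not apply directly to $U$, and Fendler's result (which concerns continuous one-parameter semigroups) does not help here either.

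The paper closes this gap by inserting a vector-valued Coifman--Weiss transference lemma (Lemma~\ref{2Transfer}) between the dilation and the known case. The known case it appeals to is the bilateral shift $s_p$ on $\ell^p_{\Zdb}$, for which \cite{B,JKRW} give the constant $C_{p,q}$ of (\ref{2Cpq}). The transference lemma carries this bound over to $U$ using only the hypothesis $\sup_{j\in\Zdb}\norm{U^j}_r<\infty$; no measure-preserving structure on $\widehat\Omega$ is needed. After that, your argument via $M_n(T)=QM_n(U)J$ and the regular tensor extension (\ref{2Reg}) goes through (the paper works with the finite-dimensional truncations $v^q_m$ and passes to the limit via Lemma~\ref{6Lem1}). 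So your ``alternative route'' via transference is in fact the correct fix, and---contrary to your closing remark---it does not require the dilated isometry to come from a measure-preserving map, only that all its integer powers be contractively regular.
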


Let
$$
s_p \colon \ell^p_{\footnotesize{\Zdb}} \longrightarrow \ell^p_{\footnotesize{\Zdb}},
\qquad s_p\bigl((c_n)_n\bigr)=(c_{n-1})_{n},
$$
denote the shift operator on $\ell^p_{\footnotesize{\Zdb}}$. According to \cite[Thm. B]{JKRW},
$s_p$ satisfies Theorem \ref{2Main} (the crucial case $p=2$ going back to \cite{B}). Thus
for any $1<p<\infty$ and $2<q<\infty$ we have
a constant
\begin{equation}\label{2Cpq}
C_{p,q}\,=\,\bignorm{c\mapsto \bigl(M_n(s_p)c\bigr)_{n\geq 0}}_{\ell^p_{\Zdb}\to \ell^p_{\Zdb}(v^q)}.
\end{equation}

The following lemma is a variant of the well-known Coifman-Weiss
transference Theorem \cite[Thm. 2.4]{CW} and
is closely related to \cite{DLT} and \cite{ABG}.

\begin{lemma}\label{2Transfer}
Let $U\colon L^p(\Omega)\to L^p(\Omega)$ be an invertible operator such that $U^j$ is regular
for any $j\in \Zdb$ and suppose that $C=\sup\{\norm{U^j}_r\, :\, j\in\Zdb\}\,<\,\infty\,$.
Let $(e_1,\ldots,e_N)$ be a basis of a finite dimensional Banach space $E$, and
let $a(1),\ldots, a(N)$ be $N$ elements of $\ell^1_{\footnotesize{\Zdb}}$.
Consider the two operators
$$
K\colon \ell^p_{\footnotesize{\Zdb}}\longrightarrow \ell^p_{\footnotesize{\Zdb}}(E),\qquad K(c)\,=\,\sum_{k=1}^{N}
\Bigl(\sum_{j\in\footnotesize{\Zdb}} a(k)_j\, s_p^j(c)\Bigr)\otimes e_k,
$$
and
$$
R\colon L^p(\Omega) \longrightarrow L^p(\Omega;E),\qquad R(x)\,=\,\sum_{k=1}^{N}
\Bigl(\sum_{j\in\footnotesize{\Zdb}} a(k)_j \,U^j(x)\Bigr)\otimes e_k.
$$
Then
$$
\norm{R}\leq C^2\norm{K}.
$$
\end{lemma}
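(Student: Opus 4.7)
The plan is to adapt the classical Coifman-Weiss transference argument to this vector-valued setting, using (\ref{2Reg}) to transport each $U^j$ into a bounded operator on $L^p(\Omega;E)$ with norm at most $\norm{U^j}_r\leq C$. First I would reduce to the case where all of $a(1),\ldots,a(N)$ are finitely supported, say in $[-M,M]\cap\Zdb$ for some $M\geq 1$: since each $a(k)$ belongs to $\ell^1_{\Zdb}$ and $\sup_j\norm{U^j}_r$ is finite, both $K$ and $R$ depend continuously (in operator norm) on these truncations, so the general case will follow by letting $M\to\infty$.

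Next I would fix $x\in L^p(\Omega)$ and a large integer $L>M$. For a.e.\ $\lambda\in\Omega$ introduce the finitely supported scalar sequence $\tilde c^\lambda\in \ell^p_{\Zdb}$ defined by $\tilde c^\lambda_n = U^{-n}(x)(\lambda)$ for $|n|\leq L$ and $\tilde c^\lambda_n=0$ otherwise. The central identity is that for every $n$ with $|n|\leq L-M$,
$$
K(\tilde c^\lambda)_n\,=\,\bigl[(U^{-n}\otimes I_E)(R(x))\bigr](\lambda).
$$
This is a direct computation: unfolding the convolutional definition of $K$ gives $\sum_k\sum_j a(k)_j U^{j-n}(x)(\lambda)\otimes e_k$, and the truncation is inert on this range because $|n-j|\leq L$ whenever $|j|\leq M$ and $|n|\leq L-M$; one then factors $U^{-n}$ out of the finite sum by linearity.

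Then I would apply $\norm{K(\tilde c^\lambda)}_{\ell^p_{\Zdb}(E)}^p\leq\norm{K}^p\norm{\tilde c^\lambda}_{\ell^p_{\Zdb}}^p$, integrate in $\lambda$, and invoke Fubini. The right-hand side becomes at most $(2L+1)\,C^p\norm{K}^p\norm{x}_p^p$, using $\norm{U^{-n}(x)}_p\leq\norm{U^{-n}}_r\norm{x}_p\leq C\norm{x}_p$. For the left-hand side, the identity $R(x)=(U^n\otimes I_E)(U^{-n}\otimes I_E)(R(x))$ together with (\ref{2Reg}) applied to $U^n$ yields $\bignorm{(U^{-n}\otimes I_E)(R(x))}_{L^p(\Omega;E)}\geq C^{-1}\norm{R(x)}_{L^p(\Omega;E)}$, so the integral is bounded below by $(2(L-M)+1)\,C^{-p}\norm{R(x)}_{L^p(\Omega;E)}^p$. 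Combining the two bounds and letting $L\to\infty$, the ratio $(2L+1)/(2(L-M)+1)$ tends to $1$, producing $\norm{R(x)}_{L^p(\Omega;E)}\leq C^2\norm{K}\norm{x}_p$, as desired.

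The main technical subtlety I anticipate is the repeated use of (\ref{2Reg}) to ensure that each $U^j\otimes I_E$ is well defined and bounded by $C$ on the Bochner space $L^p(\Omega;E)$; this is precisely where the hypothesis that all integer powers $U^j$ are regular with uniformly bounded regular norm enters, and it is crucial both for the upper estimate on the right and for the lower estimate on the left. Apart from this, the argument is essentially bookkeeping; the passage from finitely supported $a(k)$ back to $\ell^1_{\Zdb}$ coefficients is then a routine density step.
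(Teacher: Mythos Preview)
Your argument is correct and is essentially the same transference scheme as the paper's: both truncate the $a(k)$'s, average over a long block of indices, and use (\ref{2Reg}) once for the lower bound on $\norm{R(x)}$ and once for the upper bound on the $\norm{U^{-j}(x)}_p$'s, letting the block length tend to infinity. The only cosmetic difference is that the paper phrases the core step via the tensor extension $\widetilde K = I_{L^p(\Omega)}\otimes K$ on $\ell^p_{\Zdb}\bigl(L^p(\Omega)\bigr)$, whereas you work pointwise in $\lambda$ and integrate; by Fubini these are the same computation.
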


\begin{proof}
The operator $I_{L^p(\Omega)}\otimes K$ extends to a bounded operator
$L^p\bigl(\Omega;\ell^p_{\footnotesize{\Zdb}}\bigr)\to L^p\bigl(\Omega;\ell^p_{\footnotesize{\Zdb}}(E)\bigr)$,
whose norm is equal to $\norm{K}$. (Nothing special about $K$ is required for this tensor extension
property.) By Fubini's Theorem,
$$
L^p\bigl(\Omega;\ell^p_{\footnotesize{\Zdb}}\bigr)\simeq \ell^p_{\footnotesize{\Zdb}}
\bigl(L^p(\Omega)\bigr)\qquad\hbox{and}\qquad
L^p\bigl(\Omega;\ell^p_{\footnotesize{\Zdb}}(E)\bigr)\simeq \ell^p_{\footnotesize{\Zdb}}\bigl(L^p(\Omega;E)\bigr)
$$
isometrically. Further,
under these identifications, the extension of
$I_{L^p(\Omega)}\otimes K$ corresponds to the  operator
$$
\widetilde{K}\colon \ell^p_{\footnotesize{\Zdb}}\bigl(L^p(\Omega)\bigr)\longrightarrow \ell^p_{\footnotesize{\Zdb}}\bigl(L^p(\Omega; E)\bigr),\qquad \widetilde{K}(z)\,=\,\sum_{k=1}^{N}
\Bigl(\sum_{j} a(k)_j\, \bigl(s_p^j\overline{\otimes} I_{L^p(\Omega)}\bigr)(z)\Bigr)\otimes e_k
$$
Thus we have $\norm{\widetilde{K}}=\norm{K}$.

By approximation, we may suppose that $a(1),\ldots,a(N)$ are finitely supported. Let $m\geq 1$
be chosen such that $a(k)_j=0$ for any $k$ and any $\vert j\vert >m$.
Let $x\in L^p(\Omega)$ and let
$$
y_k = \,\sum_{j=-m}^{m} a(k)_j\, U^j(x)
$$
for any $k=1,\ldots,N$. Our aim is to estimate the norm of $\sum_k y_k\otimes e_k\,$
in $L^p(\Omega;E)$.
For any $i\in\Zdb$, we have
$$
\sum_{k=1}^{N} y_k\otimes e_k\, =\,(U^{i}\otimes I_E)\Bigl(
\sum_{k=1}^{N} U^{-i}y_k\otimes e_k\Bigr).
$$
Hence applying (\ref{2Reg}) to $U^{i}$, we derive that
$$
\Bignorm{\sum_{k=1}^{N} y_k\otimes e_k}_{L^p(\Omega;E)}\, \leq\, C\,\Bignorm{
\sum_{k=1}^{N} U^{-i}y_k\otimes e_k}_{L^p(\Omega;E)}
$$
Let $n\geq 1$ be an arbitrary integer and let $\chi$ be the characteristic function
of the interval $[-(n+m),(n+m)]$. We deduce from the above estimate that
\begin{align*}
(2n+1)\Bignorm{\sum_{k=1}^{N} y_k\otimes e_k}_{L^p(\Omega;E)}^{p}\, & \leq\, C^p
\,\sum_{i=-n}^{n}
\Bignorm{
\sum_{k=1}^{N} \sum_{j=-m}^{m} a(k)_j\,U^{j-i}(x)\otimes e_k}_{L^p(\Omega;E)}^{p}
\\
&
\leq\, C^p
\,\sum_{i}
\Bignorm{
\sum_{k=1}^{N} \sum_{j} a(k)_j\,\chi(j-i)\,U^{j-i}(x)\otimes e_k}_{L^p(\Omega;E)}^{p}.
\end{align*}
Since
$$
\sum_i \Bignorm{
\sum_{k=1}^{N} \sum_{j} a(k)_j\,\chi(j-i)\,U^{j-i}(x)\otimes e_k}_{L^p(\Omega;E)}^{p}
\,=\,\Bignorm{\widetilde{K}\Bigl[\bigl(\chi(-j)U^{-j}
(x)\bigr)_j\Bigr]}^p_{\ell^p_{\Zdb}(L^p(\Omega; E))}
$$
and $\norm{\widetilde{K}}=\norm{K}$, this yields
\begin{align*}
(2n+1)\Bignorm{\sum_{k=1}^{N} y_k\otimes e_k}_{L^p(\Omega;E)}^{p}\,  &\leq\, C^p
\norm{K}^p \sum_{j=-(n+m)}^{n+m}\norm{U^{-j}(x)}^p_p\\
& \leq\, \bigl(2(n+m)+1\bigr)\, C^{2p} \norm{K}^p \norm{x}^p_p.
\end{align*}
Letting $n\to\infty$, we get the result.
\end{proof}

\begin{proof}[Proof of Theorem \ref{2Main}]
Let $T\colon L^p(\Omega)\to L^p(\Omega)$ be a contractively regular operator.
There exists another measure space $(\widehat{\Omega},\widehat{\mu})$,
two positive contractions $J\colon L^p(\Omega)\to L^p(\widehat{\Omega})$
and $Q\colon L^p(\widehat{\Omega})\to L^p(\Omega)$ and an isometric invertible
operator $U\colon L^p(\widehat{\Omega})\to L^p(\widehat{\Omega})$ such that
$$
T^k=QU^kJ,\qquad k\geq 0.
$$
In the case when $T$ is positive, this is  Akcoglu's famous dilation Theorem (see \cite{A}).
The extension to regular operators stated here is from \cite{Pe} or \cite{CRW}. Moreover
$U$ can be chosen so that $U$ and $U^{-1}$ are both contractively regular.
Thus
$$
\forall\, j\in\Zdb,\qquad \norm{U^j}_r=1.
$$
Note that when $1<p\not= 2<\infty$, any isometry on $L^p$ is contractively regular,
so the latter information is relevant only when $p=2$.

We fix an integer $m\geq 1$ and we consider $v^q_m$ as defined in Section 2.
For any $n\geq 0$, we clearly have 
$$
M_n(T)=QM_n(U)J.
$$
Since $\norm{Q}_r\leq 1$, it follows from
(\ref{2Reg}) that
\begin{equation}\label{2Dilation}
\bignorm{\bigl(M_n(T)x\bigr)_{0\leq n\leq m}}_{L^p(\Omega;v^q_m)}\,\leq\,
\bignorm{\bigl(M_n(U)J(x)\bigr)_{0\leq n\leq m}}_{L^p(\widehat{\Omega};v^q_m)},\qquad x\in L^p(\Omega).
\end{equation}

For any $n=0,1,\ldots, m$, let $a(n)\in \ell^1_{\footnotesize{\Zdb}}$ be defined
by letting $a(n)_j=(n+1)^{-1}$ if $0\leq j\leq n$ and $a(n)_j=0$ otherwise. Then
$$
\sum_{j\in\footnotesize{\Zdb}}a(n)_j \, U^j\, =M_n(U) \qquad
\hbox{and}\qquad
\sum_{j\in\footnotesize{\Zdb}}a(n)_j \, s_p^j\, =M_n(s_p).
$$
Applying Lemma \ref{2Transfer} with $E=v^q_m$ and recalling (\ref{2Cpq}), we therefore deduce that
$$
\bignorm{\bigl(M_n(U)z\bigr)_{0\leq n\leq m}}_{L^p(\widehat{\Omega};v^q_m)}\,\leq\, C_{p,q}\,\norm{z}_p
$$
for any $z\in L^p(\widehat{\Omega})$. Combining with the inequality (\ref{2Dilation})
we obtain that
$$
\bignorm{\bigl(M_n(T)x\bigr)_{0\leq n\leq m}}_{L^p(\Omega;v^q_m)}\,\leq\, C_{p,q}\,\norm{x}_p
$$
for any $x\in L^p(\Omega)$. Then the result follows from Lemma \ref{6Lem1}.
\end{proof}

\begin{remark}\label{2Amenable}
The above Lemma \ref{2Transfer} extends without any difficulty to amenable groups, as follows.
Let $G$ be a locally compact amenable group, with left Haar measure $dt$,
let $\pi\colon G\to B(L^p(\Omega))$ be a strongly continuous
representation valued in the space of regular operators on $L^p(\Omega)$,
and assume that
$$
C=\sup\{\norm{\pi(t)}_r\, :\, t\in G\}\,<\infty\,.
$$
Next let $h_1,\ldots, h_N$ be $N$ elements of $L^1(G)$ and let
$K\colon L^p(G)\to L^p(G;E)$ be defined by letting $K(f)=\sum_k (h_k*f)\otimes e_k\,$
for any $f\in L^p(G)$.
Then for any $x\in L^p(\Omega)$, we have
$$
\Bignorm{\sum_k\Bigl(\int_G h_k(t)\pi(t)x\, dt\,\Bigr)\otimes e_k}_{L^p(\Omega;E)}\,\leq
\,C^2\,\norm{K}\,\norm{x}_p.
$$
\end{remark}

\bigskip
We conclude this section with a continuous version of Theorem \ref{2Main}. Given a
strongly continuous semigroup $T=(T_t)_{t\geq 0}$ on $L^p(\Omega)$,
we let
$$
M_t(T)=\,\frac{1}{t}\,\int_{0}^{t} T_s\, ds\,,\qquad t>0,
$$
defined in the strong sense.

\begin{corollary}\label{2Continuous}
Let $T=(T_t)_{t\geq 0}$ be a strongly continuous semigroup on $L^p(\Omega)$ and assume that
$T_t\colon L^p(\Omega)\to L^p(\Omega)$ is contractively regular for any $t\geq 0$.
Let $2<q<\infty$ and let $x\in L^p(\Omega)$. Then for a.e. $\lambda\in\Omega$, the family
$\bigl([M_t(T)x](\lambda)\bigr)_{t>0}$ belongs to $V^q$ and
$$
\Bignorm{\lambda\mapsto \bignorm{\bigl([M_t(T)x](\lambda)\bigr)_{t> 0}}_{V^q}}_{p}\,\leq\,
C_{p,q}\,\norm{x}_p.
$$
\end{corollary}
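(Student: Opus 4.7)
The plan is to reduce Corollary \ref{2Continuous} to the discrete Theorem \ref{2Main} through the criterion of Lemma \ref{6Lem2}, applied to the family $x_t=M_t(T)x$ for a fixed $x\in L^p(\Omega)$. That is, I would verify the continuity hypothesis (1) and the uniform finite-tuple bound (2) of Lemma \ref{6Lem2} for this family.

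Hypothesis (1) is a Fubini argument: strong continuity of $(T_t)_{t\geq 0}$ on $L^p(\Omega)$ implies that $(s,\lambda)\mapsto [T_sx](\lambda)$ admits a jointly measurable representative on $(0,\infty)\times\Omega$, so for a.e.\ $\lambda\in\Omega$ one has
$$
[M_t(T)x](\lambda)=\frac{1}{t}\int_0^t [T_sx](\lambda)\,ds,\qquad t>0,
$$
and the right-hand side is continuous in $t$ whenever the integrand is locally integrable in $s$, which again holds for a.e.\ $\lambda$ by Fubini.

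For hypothesis (2), I would fix $0<t_0<t_1<\cdots<t_m$ and discretize. Since $T_\varepsilon$ is contractively regular for every $\varepsilon>0$, Theorem \ref{2Main} yields
$$
\bignorm{\bigl(M_n(T_\varepsilon)x\bigr)_{n\geq 0}}_{L^p(v^q)}\,\leq\, C_{p,q}\norm{x}_p,
$$
with $C_{p,q}$ independent of $\varepsilon$. Choosing integers $n_k=n_k(\varepsilon)$ with $n_k\varepsilon\to t_k$ as $\varepsilon\to 0^+$ and restricting to the indices $n_0<\cdots<n_m$ gives the same bound for $\bignorm{(M_{n_0}(T_\varepsilon)x,\ldots,M_{n_m}(T_\varepsilon)x)}_{L^p(\Omega;v^q_m)}$. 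Since $M_n(T_\varepsilon)x=\frac{1}{n+1}\sum_{j=0}^{n}T_{j\varepsilon}x$ is a Riemann sum for $M_{n\varepsilon}(T)x$, strong continuity on the compact interval $[0,2t_m]$ provides $M_{n_k}(T_\varepsilon)x\to M_{t_k}(T)x$ in $L^p(\Omega)$ as $\varepsilon\to 0^+$. Passing to a subsequence $\varepsilon_\ell\to 0^+$ along which convergence holds a.e.\ simultaneously for each of the finitely many $k$, continuity of the norm on the finite-dimensional space $v^q_m$ yields a.e.\ pointwise convergence of the $v^q_m$-norms, and Fatou's lemma transfers the $L^p$-bound to $(M_{t_0}(T)x,\ldots,M_{t_m}(T)x)$. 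An application of Lemma \ref{6Lem2} then closes the argument. The principal obstacle I anticipate is the uniformity in $\varepsilon$, which is precisely what Theorem \ref{2Main} supplies, since its constant depends only on $p$ and $q$ and not on the specific contractively regular operator; once this is granted, the passage to the continuous limit is a routine Riemann-sum plus Fatou calculation.
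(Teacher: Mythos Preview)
Your proposal is correct and follows essentially the same route as the paper: both verify the two hypotheses of Lemma \ref{6Lem2} for the family $x_t=M_t(T)x$, obtaining the pointwise continuity in $t$ from a measurability/Fubini argument (the paper simply cites \cite[Section VIII.7]{DS} for this), and obtaining the uniform $v^q_m$-bound by discretizing via $T_\varepsilon$, applying Theorem \ref{2Main} with its $\varepsilon$-independent constant $C_{p,q}$, and passing to the limit. Your use of an a.e.\ convergent subsequence and Fatou is a slight over-elaboration---since $v^q_m$ is finite-dimensional, coordinatewise $L^p$-convergence already gives convergence in $L^p(\Omega;v^q_m)$ and the bound passes directly---but this is harmless.
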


\begin{proof}
Consider $x\in L^p(\Omega)$. According to \cite[Section VIII.7]{DS}, the function $t\mapsto [M_t(T)x](\lambda)$
is continuous for a.e. $\lambda\in\Omega$.
Let $t_0< t_1< \cdots < t_m$ be positive real numbers and let $\varepsilon >0$.
It follows from the strong continuity of $T=(T_t)_{t\geq 0}$ that there exist $\alpha>0$ and integers
$n_0, n_1,\ldots, n_m$ such that
$$
\forall\, k=0,\ldots,m,\qquad\bignorm{M_{t_k}(T)x\, - M_{n_k}(T_\alpha)x}_p\,<\varepsilon.
$$
Hence applying Theorem \ref{2Main} and a limit argument, we
deduce that
\begin{equation}\label{2Key}
\bignorm{\bigl(M_{t_0}(T)x,M_{t_1}(T)x,\ldots,M_{t_m}(T)x \bigr)}_{L^p(\Omega; v^q_m)}\,\leq C_{pq}\norm{x}_p.
\end{equation}
The result therefore follows from Lemma \ref{6Lem2}.
\end{proof}

An alternative proof of (\ref{2Key}) consists in using Fendler's dilation Theorem for semigroups (see \cite{Fe}),
and then arguing as in the proof Theorem \ref{2Main}. This only
requires knowing that the result of Corollary \ref{2Continuous} holds true for
the translation group on $L^p(\Rdb)$, which follows from \cite{B, JKRW, JR},
and using Remark \ref{2Amenable} for $G=\Rdb$ to transfer that result to strongly continuous groups
of contractively regular isometries.

\medskip
\section{The analytic case.}
  
Let $X$ be an arbitrary Banach space, and let
$(T_t)_{t\geq 0}$ be a strongly continuous semigroup on $X$.
We call it a bounded analytic semigroup if there exists a positive angle
$\omega\in\bigl(0,\frac{\pi}{2}\bigr)$ and a
bounded analytic family $z\in\Sigma_\omega\mapsto
T_z\in B(X)$ extending $(T_t)_{t>0}$,
where
$$
\Sigma_{\omega}\,=\,\bigl\{z\in \Cdb^*\, :\, \vert{\rm Arg}(z)\vert<\omega\bigr\}
$$
is the open sector of angle $2\omega$ around $(0,\infty)$. We refer to \cite{Go,Pa} for various
characterizations and properties of bounded analytic semigroups. We simply recall that
if we let $A$ denote the infinitesimal generator of $(T_t)_{t\geq 0}$, then the latter is
a bounded analytic semigroup if and only if
$T_t$ maps $X$ into the domain of $A$ for any $t>0$ and 
there exist two constants $C_0,C_1>0$ such that
\begin{equation}\label{3Anal1}
\forall\, t>0,\qquad \norm{T_t}\leq C_0
\qquad\hbox{and}\qquad
\norm{tAT_t}\leq C_1.
\end{equation}

The definition of analyticity for discrete semigroups parallels (\ref{3Anal1}).
Let $T\in B(X)$. We say that $T$ is power bounded if
$$
\sup_{n\geq 0}\norm{T^n}\,<\infty
$$
and  that it is analytic if moreover,
$$
\sup_{n\geq 1} n\norm{T^{n}-T^{n-1}}\,<\infty.
$$
This notion goes back to \cite{CSC} and has been studied in various contexts
so far. We gather here a few spectral properties
of these operators and refer to \cite{Bl2, Ly,NZ,N} for proofs and complements.
The most important result is the following:
an operator $T$ is power bounded and analytic if and only if
\begin{equation}\label{3Ritt}
\sigma(T)\subset \overline{\Ddb}\qquad\hbox{and}\qquad
\bigl\{(z-1)(zI-T)^{-1}\,:\,\vert z \vert>1\bigr\}\ \hbox{is bounded}.
\end{equation}
This property is called the `Ritt condition'.
The key argument for this characterization is due to O. Nevanlinna \cite{N}.

For any angle $\gamma\in\bigl(0,\frac{\pi}{2}\bigr)$, let
$B_\gamma$ be the convex hull of $1$ and the closed disc $\overline{D}(0,\sin\gamma)$.

\begin{figure}[ht]
\vspace*{2ex}
\begin{center}
\includegraphics[scale=0.4]{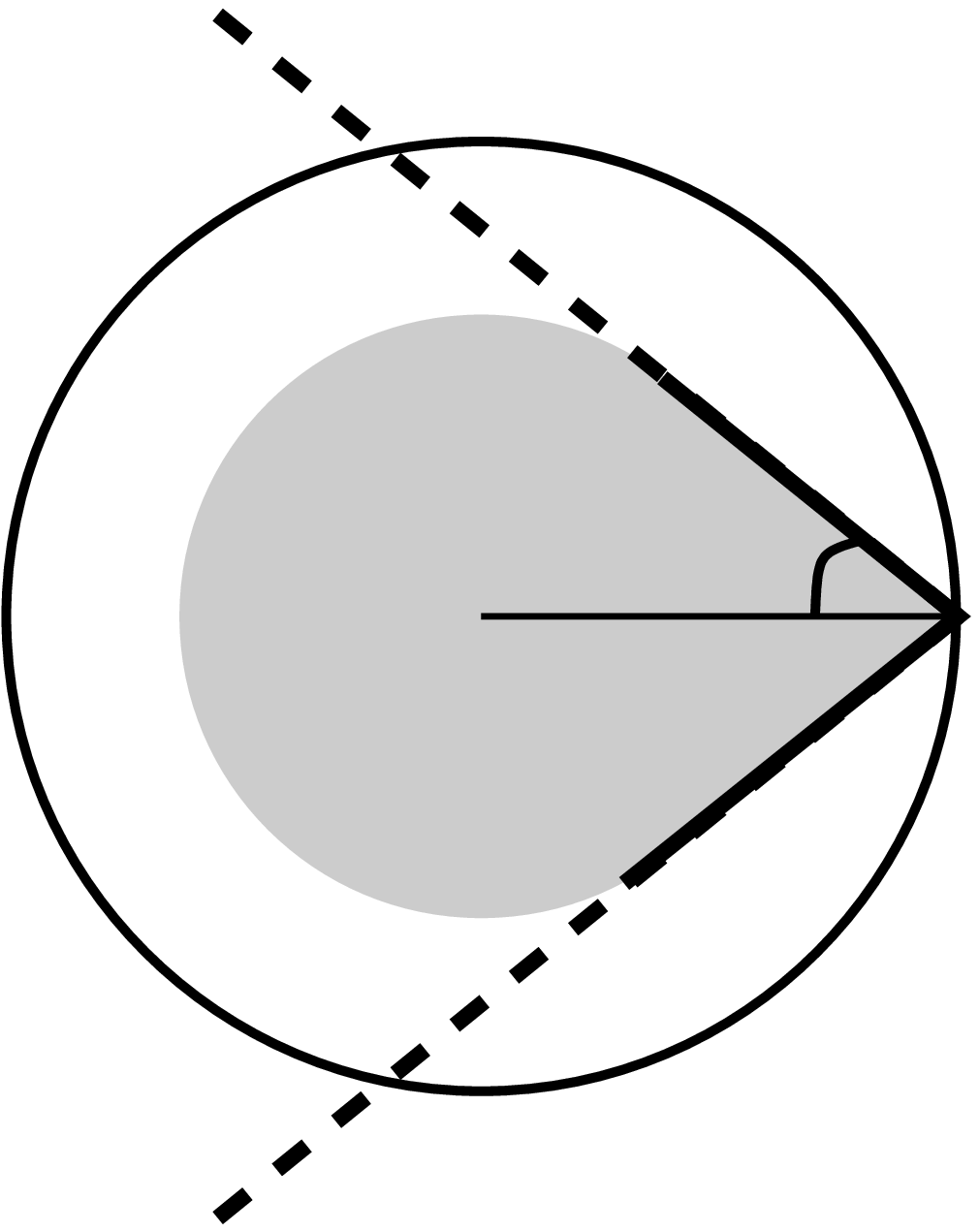}
\begin{picture}(0,0)
\put(-2,65){{\footnotesize $1$}}
\put(-68,65){{\footnotesize $0$}}
\put(-32,81){{\footnotesize $\gamma$}}
\put(-55,85){{\small $B_\gamma$}}
\end{picture}
\end{center}
\caption{\label{f1}}
\end{figure}

\noindent
Then (\ref{3Ritt}) implies that
\begin{equation}\label{3Stolz1}
\exists\,\gamma\in\bigl(0,\frac{\pi}{2}\bigr)\ \big\vert\qquad
\sigma(T)\subset B_\gamma.
\end{equation}
Furthermore, (\ref{3Stolz1}) is equivalent to
\begin{equation}\label{3Stolz2}
\exists\,K>0\ \big\vert\ \ \forall\, z\in\sigma(T),\qquad \vert 1-z\vert\leq K\bigl(1-\vert z\vert\bigr).
\end{equation}

The aim of this section is to show that under an analyticity assumption, the ergodic averages
can be replaced by the semigroup itself in either Theorem \ref{2Main} (discrete case) or
Corollary \ref{2Continuous}  (continuous case).

\bigskip
As in Section 3, we consider a measure space $(\Omega,\mu)$  and
a number $1<p<\infty$. We will consider an operator
$T\colon L^p(\Omega)\to L^p(\Omega)$ and we let
$$
\Delta_n^m= T^n(T-I)^m
$$
for any integers $n,m\geq 0$. Note that $(\Delta_n^m)_{n\geq 0}$ is the $m$-difference
sequence of $(T^n)_{n\geq 0}$.

We will need the following Littlewood-Paley type
inequalities which were estabished in \cite{LMX}.

\begin{proposition}\label{3SFE}
Let $T\colon L^p(\Omega)\to L^p(\Omega)$ be a contractively regular
operator, with $1<p<\infty$, and assume that $T$ is analytic. 
Then for any integer $m\geq 0$, there is a constant $C_m>0$ such that
$$
\Bignorm{\Bigl(\sum_{n=0}^{\infty} (n+1)^{2m+1}\bigl\vert \Delta_{n}^{m+1}(x)
\bigr\vert^2\Bigr)^{\frac{1}{2}}}_p\,\leq\,C_m \,\norm{x}_p,\qquad x\in L^p(\Omega).
$$
\end{proposition}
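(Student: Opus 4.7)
The plan is to deduce the square function inequality from a bounded $H^\infty$-functional calculus for $T$ on a Stolz-type domain $B_\gamma$, by choosing a judicious family of test polynomials. The analyticity hypothesis and the spectral characterization \eqref{3Ritt} guarantee that $\sigma(T)\subset B_\gamma$ for some $\gamma\in(0,\frac{\pi}{2})$, and the geometric property $|1-z|\leq K(1-|z|)$ valid on $B_\gamma$ (cf.\ \eqref{3Stolz2}) will provide the pointwise control we need.

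First I would introduce the polynomials
$$
\varphi_n(z)\,=\,(n+1)^{m+\frac12}\,z^{n}(z-1)^{m+1},\qquad n\geq 0,
$$
so that $\varphi_n(T)=(n+1)^{m+\frac12}\Delta_n^{m+1}$. Combining the Stolz estimate with the classical asymptotic $\sum_{n\geq 0}(n+1)^{2m+1}r^{2n}\asymp(1-r)^{-(2m+2)}$ as $r\to 1^-$ yields
$$
\sup_{z\in B_\gamma}\sum_{n=0}^{\infty}|\varphi_n(z)|^{2}\,\leq\,K^{2m+2}\,\sup_{0\leq r<1}(1-r)^{2m+2}\sum_{n=0}^{\infty}(n+1)^{2m+1}r^{2n}\,\leq\,C_m.
$$
The target inequality is precisely the boundedness of the single operator $x\mapsto(\varphi_n(T)x)_{n\geq 0}$ from $L^p(\Omega)$ into $L^p(\Omega;\ell^{2})$.

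The mechanism to turn this uniform pointwise $\ell^2$-bound on $(\varphi_n)$ into the desired $L^p(\ell^2)$-bound on $(\varphi_n(T)x)$ is a bounded $H^\infty(B_\gamma;\ell^2)$-functional calculus for $T$: given such a calculus, $z\mapsto(\varphi_n(z))_n$ defines an element of $H^\infty(B_\gamma;\ell^2)$ of norm at most $\sqrt{C_m}$, whose image under the calculus is exactly $x\mapsto(\varphi_n(T)x)_n$. The main obstacle is therefore the existence of this $\ell^2$-valued (equivalently, $R$-bounded) $H^\infty$-calculus over $B_\gamma$ for a contractively regular analytic operator on $L^p$; this is not a formal spectral statement and requires genuine use of the regularity hypothesis. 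The natural route is to combine Peller's dilation $T^n=QU^nJ$, with $U$ a contractively regular invertible isometry on some $L^p(\hat\Omega)$, with a transference argument in the spirit of Lemma \ref{2Transfer} that reduces an $\ell^2$-valued Stolz-calculus problem for $T$ to a multiplier problem for $U$; the contractive regularity hypothesis is what upgrades the scalar bounds coming from power-boundedness to the square-function bounds required here. This is precisely the content of the square-function inequality established in \cite{LMX}, on which Proposition \ref{3SFE} rests.
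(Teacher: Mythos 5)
The paper contains no proof of Proposition~\ref{3SFE}: it is imported verbatim as one of the square-function inequalities established in \cite{LMX}, so the ``proof'' in the paper is the citation itself. Your proposal, after a correct pointwise reduction (the bound $\sup_{z\in B_\gamma}\sum_{n\geq 0}|\varphi_n(z)|^{2}<\infty$ via $|1-z|\leq K(1-|z|)$), likewise defers the essential analytic content --- the $\ell^2$-valued (square-function) estimate for a contractively regular Ritt operator --- to \cite{LMX}, which is exactly the statement being proved; thus you match the paper's treatment, with the caveat that your argument is a plausible sketch of the route taken in \cite{LMX} rather than an independent proof.
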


We will also use the following elementary estimates, whose proofs are left to the reader.

\begin{lemma}\label{3Sum}
For any integer $m\geq 0$, there exists a constant $K_m$ such that for any $n\geq 1$,
$$
\Bigl(\sum_{j=n}^{2n} (j+1)^{1-2m}\Bigr)^{\frac{1}{2}}\,\leq\, K_m n^{-m+1}.
$$
\end{lemma}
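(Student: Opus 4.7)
The plan is to observe that the sum in question has exactly $n+1$ terms, all of the same order $n^{1-2m}$, so the sum should be of order $n^{2-2m}$, and taking a square root yields the claimed $n^{1-m}$. The only point that requires a moment of thought is whether the summand $(j+1)^{1-2m}$ is increasing or decreasing in $j$ on the range $[n,2n]$, which depends on the sign of the exponent $1-2m$. So I would split into the two cases $m=0$ and $m\geq 1$.

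For $m\geq 1$, the exponent $1-2m$ is negative, so $j\mapsto (j+1)^{1-2m}$ is decreasing, and each of the $n+1$ terms is bounded by its value at $j=n$, namely $(n+1)^{1-2m}\leq n^{1-2m}$. This yields
$$
\sum_{j=n}^{2n}(j+1)^{1-2m}\,\leq\, (n+1)\,n^{1-2m}\,\leq\, 2n^{2-2m},
$$
and taking the square root gives the bound with $K_m=\sqrt{2}$. For $m=0$, the summand $j+1$ is increasing, so each term is bounded by $2n+1\leq 3n$ (for $n\geq 1$), giving $\sum_{j=n}^{2n}(j+1)\leq 3n(n+1)\leq 6n^2$; the square root then produces the bound with $K_0=\sqrt{6}$. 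In either case $K_m$ depends only on $m$, as required.

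An equivalent route would be to compare the sum to $\int_{n}^{2n+1}(x+1)^{1-2m}\,dx$, which can be evaluated explicitly (logarithmically when $m=1$, by power rule otherwise) and gives the same order of magnitude. I expect no real obstacle: this is a purely numerical estimate, and the authors themselves note that the proof is routine. The slight care needed only lies in handling the two cases of monotonicity uniformly so as to extract a constant $K_m$ that is finite for every $m\geq 0$.
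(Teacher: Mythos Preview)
Your argument is correct and is exactly the kind of elementary estimate the authors had in mind; note that the paper leaves this lemma to the reader, so there is no proof to compare against. Your case split on the sign of $1-2m$ and the bound of each of the $n+1$ terms by the extremal one is the natural route and yields the result with the constants you give.
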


\begin{lemma}\label{3Mult}
For any sequences $(\delta_n)_{n\geq 0}\in v^1$ and $(z_n)_{n\geq 0}\in L^p(\Omega;v^q)$, we have
$(\delta_n z_n)_{n\geq 0}\in L^p(\Omega;v^q)$ and
$$
\bignorm{(\delta_n z_n)_{n\geq 0}}_{L^p(v^q)}\,\leq 3 \,
\bignorm{(\delta_n)_{n\geq 0}}_{v^1}\,\bignorm{(z_n)_{n\geq 0}}_{L^p(v^q)}.
$$
\end{lemma}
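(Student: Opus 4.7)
The plan is to first prove the scalar version of the inequality and then promote it to $L^p(\Omega;v^q)$ by applying it pointwise in $\lambda$.

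For the scalar case, fix an increasing sequence $(n_k)_{k\geq 0}$ with $n_0=0$ and write the discrete Leibniz-type identity
$$
\delta_{n_k}z_{n_k}-\delta_{n_{k-1}}z_{n_{k-1}}\,=\,\delta_{n_k}\bigl(z_{n_k}-z_{n_{k-1}}\bigr)
\,+\,z_{n_{k-1}}\bigl(\delta_{n_k}-\delta_{n_{k-1}}\bigr),\qquad k\geq1,
$$
together with $\delta_0 z_0=\delta_0 z_0+0$. Applying Minkowski's inequality in $\ell^q$ to the two groups of terms separately, the quantity
$\bigl(|\delta_0 z_0|^q+\sum_{k\geq 1}|\delta_{n_k}z_{n_k}-\delta_{n_{k-1}}z_{n_{k-1}}|^q\bigr)^{1/q}$ is bounded by
$$
\Bigl(|\delta_0|^q|z_0|^q+\sum_{k\geq 1}|\delta_{n_k}|^q\,|z_{n_k}-z_{n_{k-1}}|^q\Bigr)^{\!1/q}
\,+\,\Bigl(\sum_{k\geq 1}|z_{n_{k-1}}|^q\,|\delta_{n_k}-\delta_{n_{k-1}}|^q\Bigr)^{\!1/q}.
$$

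Now I bound each piece using the elementary sup-norm estimates recalled in Section~2. Since any element of $v^1$ is automatically in $\ell^\infty$ with $\norm{(\delta_n)}_\infty\leq\norm{(\delta_n)}_{v^1}$ (telescope $\delta_m=\delta_0+\sum_{k=1}^m(\delta_k-\delta_{k-1})$), the first term is at most $\norm{(\delta_n)}_{v^1}\norm{(z_n)}_{v^q}$. For the second term I use $\norm{(z_n)}_\infty\leq 2\norm{(z_n)}_{v^q}$ and the trivial inclusion $\ell^1\hookrightarrow\ell^q$ to get
$$
\Bigl(\sum_{k\geq 1}|\delta_{n_k}-\delta_{n_{k-1}}|^q\Bigr)^{\!1/q}\,\leq\,\sum_{k\geq 1}|\delta_{n_k}-\delta_{n_{k-1}}|\,\leq\,\norm{(\delta_n)}_{v^1},
$$
so the second term is at most $2\,\norm{(\delta_n)}_{v^1}\norm{(z_n)}_{v^q}$. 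Taking the supremum over $(n_k)$ yields $\norm{(\delta_n z_n)}_{v^q}\leq 3\,\norm{(\delta_n)}_{v^1}\norm{(z_n)}_{v^q}$ in the scalar case.

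For the $L^p(\Omega;v^q)$ statement, I apply the scalar inequality at almost every $\lambda\in\Omega$ to the sequence $(z_n(\lambda))_{n\geq 0}$, which by Lemma~\ref{6Lem1} belongs to $v^q$ for a.e.\ $\lambda$, to obtain
$$
\bignorm{\bigl(\delta_n z_n(\lambda)\bigr)_{n\geq 0}}_{v^q}\,\leq\,3\,\norm{(\delta_n)}_{v^1}\,\bignorm{\bigl(z_n(\lambda)\bigr)_{n\geq 0}}_{v^q}\qquad\text{a.e.\ }\lambda.
$$
Raising to the $p$-th power, integrating, and applying Lemma~\ref{6Lem1} once more delivers the desired conclusion. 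The only mildly subtle point is keeping the constant equal to $3$ rather than $4$: this forces me to exploit the sharper bound $\norm{\cdot}_\infty\leq\norm{\cdot}_{v^1}$ (instead of the generic $2\norm{\cdot}_{v^q}$ bound) on the factor $(\delta_n)$, while accepting the factor $2$ on the factor $(z_n)$. Everything else is essentially a straight application of Minkowski's inequality, so no substantive obstacle is expected.
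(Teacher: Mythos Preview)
The paper does not supply a proof of this lemma; it is explicitly labeled an ``elementary estimate whose proof is left to the reader.'' Your argument is correct and is precisely the natural one: the discrete Leibniz splitting, the sharp bound $\norm{(\delta_n)}_\infty\leq\norm{(\delta_n)}_{v^1}$ on the first piece, the bound $\norm{(z_n)}_\infty\leq 2\norm{(z_n)}_{v^q}$ together with $\ell^1\hookrightarrow\ell^q$ on the second, and finally the pointwise passage via Lemma~\ref{6Lem1}. Nothing is missing.
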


\bigskip
In the next statements and their proofs, $\lesssim$ will stand for an inequality up to
a constant which may depend on $T$, $q$ and $m$, but not on $x$.

\begin{theorem}\label{3Main} Let $T\colon L^p(\Omega)\to L^p(\Omega)$ be a contractively regular
operator, with $1<p<\infty$, and assume that $T$ is analytic. Then for any $2<q<\infty$, we have an estimate
\begin{equation}\label{3Tn}
\bignorm{\bigl(T^n (x)\bigr)_{n\geq 0}}_{L^p(v^q)}\,\lesssim\,\norm{x}_p,\qquad x\in L^p(\Omega).
\end{equation}
More generally, for any integer $m\geq 0$,
we have an estimate
\begin{equation}\label{3Delta}
\bignorm{\bigl(n^m\Delta_n^m (x)\bigr)_{n\geq 1}}_{L^p(v^q)}\,\lesssim\,\norm{x}_p,\qquad x\in L^p(\Omega).
\end{equation}
\end{theorem}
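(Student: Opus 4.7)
The plan is to prove~\eqref{3Delta}, of which~\eqref{3Tn} is the case $m=0$. Set $u_n = n^m\Delta_n^m x$. The strategy is a long/short dyadic decomposition of the variation norm: for any sequence $(a_n)_{n\ge 0}$ and $q>2$,
\begin{equation*}
\norm{(a_n)_{n\ge 0}}_{v^q}^q \,\le\, C_q\Bigl(\norm{(a_{2^k})_{k\ge 0}}_{v^q}^q \,+\,\sum_{k\ge 0}\mathrm{Var}_q(a, I_k)\Bigr),
\end{equation*}
where $I_k=[2^k, 2^{k+1})$ and $\mathrm{Var}_q(a, I_k)$ is the supremum of $\sum_i|a_{n_{i+1}}-a_{n_i}|^q$ over partitions of $I_k$. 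This inequality is proved by splitting any competing partition into pairs with both endpoints in a single $I_k$ (``short'' pairs) and pairs that straddle a dyadic boundary (``long'' pairs), then inserting the nearest $2^k$-endpoints in each long pair and using $|a-b|^q\le 3^{q-1}(|a-c|^q+|c-d|^q+|d-b|^q)$. The task thus reduces to bounding the ``long'' variation $\norm{(u_{2^k})_k}_{L^p(v^q)}$ and the total ``short'' variation $\|(\sum_k\mathrm{Var}_q(u, I_k))^{1/q}\|_p$ separately.

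For the short variation, I would use $\mathrm{Var}_q(u, I_k)\le \mathrm{Var}_2(u, I_k)^{q/2}$ (valid because $q\ge 2$) and the length bound $\mathrm{Var}_2(u, I_k)\le 2^k\sum_{n\in I_k}|u_{n+1}-u_n|^2$. Expanding
\begin{equation*}
u_{n+1}-u_n = (n+1)^m \Delta_n^{m+1}x + \bigl((n+1)^m - n^m\bigr)\Delta_n^m x,
\end{equation*}
using $(n+1)^m-n^m\lesssim (n+1)^{m-1}$, and applying Lemma~\ref{3Sum} to the resulting block sums gives $\mathrm{Var}_2(u, I_k)^{1/2}\lesssim S_k^{(m)}(x) + S_k^{(m-1)}(x)$, where $S_k^{(j)}(x)^2:= \sum_{n\in I_k}(n+1)^{2j+1}|\Delta_n^{j+1}x|^2$ (the $(m-1)$-term being absent when $m=0$ since then $(n+1)^0-n^0=0$). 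Since $q>2$, the pointwise embedding $\ell^q\hookrightarrow\ell^2$ in the index $k$ yields
\begin{equation*}
\Bignorm{\Bigl(\sum_k\mathrm{Var}_q(u, I_k)\Bigr)^{1/q}}_p \,\lesssim\, \sum_{j\in\{m-1, m\}}\Bignorm{\Bigl(\sum_n (n+1)^{2j+1}|\Delta_n^{j+1}x|^2\Bigr)^{1/2}}_p \,\lesssim\,\norm{x}_p
\end{equation*}
by Proposition~\ref{3SFE} applied at the parameters $j=m-1$ and $j=m$.

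For the long variation, I would treat $m=0$ first, using the identity $T^n = (n+1)M_n(T) - nM_{n-1}(T)$ (verified by direct expansion) to split $T^{2^k}x = M_{2^k}(T)x + R_{2^k}x$ with $R_n = \frac{1}{n+1}\sum_{j=0}^{n-1}(j+1)\Delta_j^1 x$. Theorem~\ref{2Main} applied to $(M_n(T)x)_n$ controls the $L^p(v^q)$-norm of the subsequence $(M_{2^k}(T)x)_k$. To control the remainder $(R_{2^k}x)_k$, I would apply the same dyadic decomposition to the full sequence $(R_n x)_n$: the short variation of $(R_n)$ is treated via the identity $R_{n+1}-R_n = \frac{(n+1)\Delta_n^1 x - R_n}{n+2}$ together with Lemma~\ref{3Sum} and Proposition~\ref{3SFE} at level $0$, while Lemma~\ref{3Mult} is used to absorb the slowly-varying prefactor $\frac{n}{n+1}$. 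The case $m\ge 1$ is handled analogously, starting from the tail representation $\Delta_n^m = -\sum_{j\ge n}\Delta_j^{m+1}$ (valid since $\Delta_j^m x\to 0$ in $L^p$ for analytic $T$ and $m\ge 1$) and estimating the resulting tail sum using Lemma~\ref{3Sum} and Proposition~\ref{3SFE} at level $m$. The main obstacle is exactly this long-variation estimate for $(R_{2^k}x)_k$ (and its tail-sum analog for higher $m$): the crude ``$L^\infty$ plus $\ell^2$-diff'' bound does not suffice for $v^q$ with $q>2$, so the proof must delicately use the Cesàro structure of $R_n$ and the precise weight $(n+1)$ appearing in the square function, with Lemma~\ref{3Sum} providing the crucial matching of powers of $n$ and Lemma~\ref{3Mult} absorbing slowly-varying scalar prefactors.
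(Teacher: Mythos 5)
Your framework (dyadic long/short splitting of the $v^q$-norm, short variation controlled by Cauchy--Schwarz and the square functions of Proposition \ref{3SFE}, reduction of the long part to Theorem \ref{2Main} plus a remainder) is sound in spirit and close to the paper's strategy, and your short-variation estimate is essentially correct. But the proof has a genuine gap exactly where you flag "the main obstacle": you never actually bound the $q$-variation of the remainder, i.e.\ of $(R_{2^k}x)_k$ with $R_n=\frac{1}{n+1}\sum_{j=0}^{n-1}(j+1)\Delta_j^1x$ for $m=0$, nor of the tail sums $n^m\sum_{j\ge n}\Delta_j^{m+1}x$ for $m\ge 1$. The devices you invoke there do not close it: bounding consecutive differences $R_{n+1}-R_n$ in $\ell^2$ (or pointwise) does not control a variation norm along arbitrary increasing subsequences --- $\ell^2$ summability of consecutive increments does not even imply boundedness of the sequence --- and re-running the long/short decomposition on $(R_n)_n$ is circular, since its long (dyadic) variation is precisely the quantity left unestimated. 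Lemma \ref{3Mult} cannot absorb the Ces\`aro weight $\frac{j+1}{n+1}$ either, because that weight depends on both indices and is not a fixed bounded-variation scalar multiplier.

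What is needed, and what the paper supplies, is a genuine $v^2$-estimate for these weighted block sums valid for \emph{every} increasing sequence $(n_k)$: the paper proceeds by induction on $m$ starting from $m=-1$ (Theorem \ref{2Main}), writes the algebraic identity \eqref{32} expressing $n^m\Delta_{2n+1}^m$ through $A_n=n^{m-1}\sum_{j=n}^{2n}(j+1)\Delta_j^{m+1}$, $B_n=n^m(\Delta_{2n+1}^m-\Delta_n^m)$ and lower-order terms, and then proves $\norm{(A_n(x))_n}_{L^p(v^2)}\lesssim\norm{x}_p$ and $\norm{(B_n(x))_n}_{L^p(v^2)}\lesssim\norm{x}_p$ via the decomposition $A_{n_k}-A_{n_{k-1}}=a_k+b_k+c_k$ of \eqref{3Decomp}. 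The terms $a_k,b_k$ are handled by Cauchy--Schwarz against disjoint blocks of the square function (this is the analogue of your short-variation computation), but the crucial extra ingredient is the treatment of the $c_k$'s, which encode exactly the variation of the prefactors $n^{m-1}$, $n^m$ (and, at $m=0$, of the Ces\`aro normalization): one must show that each weight $(j+1)^{2m+1}\vert\Delta_j^{m+1}(x)\vert^2$ is hit with uniformly bounded total coefficient $\Lambda_j$, which the paper proves by a telescoping/overlap argument. Your proposal contains no substitute for this bounded-overlap estimate, so inequality \eqref{3Delta} (and hence \eqref{3Tn}) is not established as written; supplying an argument of the $a_k+b_k+c_k$ type for your $R_n$ and tail sums would essentially reproduce the paper's proof.
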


\begin{proof}
It will be convenient to set
$$
\Delta_n^{-1}\,=\,nM_{n-1}(T)\,=\sum_{j=0}^{n-1} T^j \,,\qquad n\geq 1.
$$
Then for any $n<N$ and any $m\geq -1$, we have
\begin{equation}\label{31}
\Delta_{N}^{m} -\Delta_{n}^{m}\,=\,\sum_{j=n}^{N-1}\Delta_{j}^{m+1}.
\end{equation}
With the above notation, (\ref{3Delta}) holds true for $m=-1$, by Theorem \ref{2Main}.

We will proceed by induction. We fix an integer $m\geq 0$ and assume that
(\ref{3Delta}) holds true for $(m-1)$. Thus using Lemma \ref{3Mult}, we both have
\begin{equation}\label{3Induction1}
\bignorm{\bigl(n^{m-1}\Delta_{n+1}^{m-1}(x)\bigr)_{n\geq 1}}_{L^p(v^q)}\,\lesssim\,\norm{x}_p
\end{equation}
and
\begin{equation}\label{3Induction2}
\bignorm{\bigl(n^{m-1}\Delta_{2n+1}^{m-1} (x)\bigr)_{n\geq 1}}_{L^p(v^q)}\,\lesssim\,\norm{x}_p.
\end{equation}
Next for any $n\geq 1$, we write
\begin{align*}
\sum_{j=n}^{2n} (j+1)\Delta_{j}^{m+1}\, & =\,\sum_{j=n}^{2n}(j+1)(
\Delta_{j+1}^{m} - \Delta_{j}^{m})\\
& =\,\sum_{j=n+1}^{2n+1} j\Delta_{j}^{m}\, -\,\sum_{j=n}^{2n}(j+1)\Delta_{j}^{m}\\
& =\,-\sum_{j=n+1}^{2n} \Delta_{j}^{m}\, +\, (2n+1)\Delta_{2n+1}^{m}\,-\,(n+1) \Delta_{n}^{m}\\
& =\, n\Delta_{2n+1}^{m}\,+ (n+1)(\Delta_{2n+1}^{m} - \Delta_{n}^{m})\, +\, \Delta_{n+1}^{m-1}\, -\,\Delta_{2n+1}^{m-1},
\end{align*}
using (\ref{31}) in due places.
Hence
\begin{align}\label{32}
n^m\Delta_{2n+1}^{m}\,=\,n^{m-1}\sum_{j=n}^{2n} (j+1)\Delta_{j}^{m+1}& \,-\, n^{m-1}(n+1)
(\Delta_{2n+1}^{m} - \Delta_{n}^{m})\\ \notag & +\, \,n^{m-1}\Delta_{2n+1}^{m-1}\, -\, n^{m-1}\Delta_{n+1}^{m-1}.
\end{align}
This identity suggests the introduction of the following two sequences of operators. For any  $n\geq 1$, we set
$$
A_n=\,n^{m-1}\sum_{j=n}^{2n} (j+1)\Delta_{j}^{m+1}\qquad\hbox{and}\qquad
B_n=\, n^m(\Delta_{2n+1}^{m} - \Delta_{n}^{m}).
$$
Also for any $x\in L^p(\Omega)$, we set
$$
\Phi_m(x)\,=\,\Bigl(\sum_{j=1}^{\infty}(j+1)^{2m+1}\bigl\vert \Delta_{j}^{m+1}(x)\vert^2\Bigr)^{\frac{1}{2}}.
$$
According to Proposition \ref{3SFE}, this function is an element of $L^p(\Omega)$.

Let $(n_k)_{k\geq 0}$ be an increasing sequence of integers, with $n_0=1$.
For any $k\geq 1$, we set
\begin{align*}
a_k & = \left\{
    \begin{array}{ll}\displaystyle
        n_k^{m-1}\,\sum_{j=2n_{k-1}+1}^{2n_k} (j+1)\Delta_{j}^{m+1}\qquad  & \mbox{if } 2n_{k-1}\geq n_k \\
        \displaystyle
        n_k^{m-1}\,\sum_{j=n_k}^{2n_k} (j+1)\Delta_{j}^{m+1} \qquad  & \mbox{if } 2n_{k-1} < n_k,
    \end{array}
\right.\\
\smallskip
b_k  &= \left\{
    \begin{array}{ll}\displaystyle
        -n_{k-1}^{m-1}\,\sum_{j=n_{k-1}}^{n_k -1} (j+1)\Delta_{j}^{m+1} \qquad \ & \mbox{if } 2n_{k-1}\geq n_k \\
        \displaystyle
        -n_{k-1}^{m-1}\,\sum_{j=n_{k-1}}^{2n_{k -1}} (j+1)\Delta_{j}^{m+1} \qquad \  & \mbox{if } 2n_{k-1} < n_k,
    \end{array}
\right.\\
\bigskip
c_k  & = \left\{
    \begin{array}{ll}\displaystyle
        \Bigl(n_k^{m-1} -n_{k-1}^{m-1}\Bigr)\,\sum_{j=n_{k}}^{2n_{k-1}}
        (j+1)\Delta_{j}^{m+1}\  & \mbox{if } 2n_{k-1}\geq n_k \\
        \quad 0  & \mbox{if } 2n_{k-1} < n_k.
    \end{array}
\right.
\end{align*}
This yields a decomposition
\begin{equation}\label{3Decomp}
A_{n_k} -A_{n_{k-1}}\,=\,a_k+ b_k+ c_k.
\end{equation}
Let $x\in L^p(\Omega)$. If $2n_{k-1}\geq n_k$,
we have, using Cauchy-Schwarz,
\begin{align*}
\bigl\vert a_k(x)\bigr\vert\,& \leq\,n_k^{m-1}
\sum_{j=2n_{k-1}+1}^{2n_k} (j+1)\bigl\vert \Delta_{j}^{m+1}(x)\bigr\vert\\
& \leq\, n_k^{m-1}\Bigl(\sum_{j=2n_{k-1}+1}^{2n_k} (j+1)^{1-2m}\Bigr)^{\frac{1}{2}}\,
\Bigl(\sum_{j=2n_{k-1}+1}^{2n_k} (j+1)^{2m+1}\bigl\vert \Delta_{j}^{m+1}(x)\bigr\vert^2
\Bigr)^{\frac{1}{2}}\\
& \leq\, n_k^{m-1}\Bigl(\sum_{j=n_{k}}^{2n_k} (j+1)^{1-2m}\Bigr)^{\frac{1}{2}}\,
\Bigl(\sum_{j=2n_{k-1}+1}^{2n_k} (j+1)^{2m+1}\bigl\vert \Delta_{j}^{m+1}(x)\bigr\vert^2
\Bigr)^{\frac{1}{2}}.
\end{align*}
Similarly if $2n_{k-1}< n_k$, we have
\begin{align*}
\bigl\vert a_k(x)\bigr\vert\,&
\leq\, n_k^{m-1}\Bigl(\sum_{j=n_{k}}^{2n_k} (j+1)^{1-2m}\Bigr)^{\frac{1}{2}}\,
\Bigl(\sum_{j=n_{k}}^{2n_k} (j+1)^{2m+1}\bigl\vert \Delta_{j}^{m+1}(x)\bigr\vert^2
\Bigr)^{\frac{1}{2}}\\
& \leq\, n_k^{m-1}\Bigl(\sum_{j=n_{k}}^{2n_k} (j+1)^{1-2m}\Bigr)^{\frac{1}{2}}\,
\Bigl(\sum_{j=2n_{k-1}+1}^{2n_k} (j+1)^{2m+1}\bigl\vert \Delta_{j}^{m+1}(x)\bigr\vert^2
\Bigr)^{\frac{1}{2}}.
\end{align*}
Hence in both cases,
we have
$$
\bigl\vert a_k(x)\bigr\vert^2\,\leq\,
K_m^2 \sum_{j=2n_{k-1}+1}^{2n_k} (j+1)^{2m+1}\bigl\vert \Delta_{j}^{m+1}(x)\bigr\vert^2
\,,
$$
by Lemma \ref{3Sum}. Summing up, we deduce that
\begin{equation}\label{3Alpha}
\sum_{k=1}^{\infty}\bigl\vert a_k(x)\bigr\vert^2\,\leq\,
K_m^2 \,\Phi_m(x)^2.
\end{equation}
Likewise we have
\begin{equation}\label{3Beta}
\sum_{k=1}^{\infty}\bigl\vert b_k(x)\bigr\vert^2\,\leq\,
K_m^2 \,\Phi_m(x)^2.
\end{equation}
We now turn to $c_k(x)$. Assume that $2n_{k-1}\geq n_k$. Then using again
Cauchy-Schwarz and Lemma \ref{3Sum}, we have
$$
\bigl\vert c_k(x)\bigr\vert\,
\leq\, \bigl\vert n_k^{m-1} -n_{k-1}^{m-1}\bigr\vert\,\Bigl(\sum_{j=n_{k}}^{2n_{k-1}}
(j+1)^{1-2m}\Bigr)^{\frac{1}{2}}\,\Bigl(\sum_{j=n_{k}}^{2n_{k-1}}
(j+1)^{2m+1}\bigl\vert \Delta_{j}^{m+1}(x)\bigr\vert^2
\Bigr)^{\frac{1}{2}},
$$
hence
$$
\bigl\vert c_k(x)\bigr\vert^2\,\leq\, K_m^2\,\biggl(
\frac{n_k^{m-1} -n_{k-1}^{m-1}}{n_k^{m-1}}\biggr)^2\
\sum_{j=n_{k}}^{2n_{k-1}}
(j+1)^{2m+1}\bigl\vert \Delta_{j}^{m+1}(x)\bigr\vert^2.
$$
For any integer $j\geq 1$, define
$$
J_j=\{k\geq 1\, :\, n_{k}\leq j\leq 2n_{k-1}\},
$$
and set
$$
\Lambda_j\,=\,\sum_{k\in J_j}\biggl(
\frac{n_k^{m-1} -n_{k-1}^{m-1}}{n_k^{m-1}}\biggr)^2\,.
$$
Then it follows from the above calculation that
$$
\sum_{k=1}^{\infty}\bigl\vert c_k(x)\bigr\vert^2\,\leq\,
K_m^2\sum_{j=1}^{\infty} \Lambda_j\,(j+1)^{2m+1}\bigl\vert \Delta_{j}^{m+1}(x)\bigr\vert^2\,.
$$
Let us now estimate the $\Lambda_j$'s.
Observe that if $J_j$ is a non empty set, then it is a finite interval of integers. Thus it reads as
$$
J_j=\{k_j-N+1, k_j-N+2,\ldots, k_j-1, k_j\},
$$
where $k_j$ is the biggest element of $J_j$ and $N$ is its cardinal.

Suppose that $m\geq 2$, so that
the sequence $(n_k^{m-1})_k$ is increasing.
Then
$$
\sum_{k\in J_j}
n_k^{m-1} -n_{k-1}^{m-1}\, = \,\sum_{r=0}^{N-1} n_{k_j-r}^{m-1} - n_{k_j -r-1}^{m-1}\,
=  \, n_{k_j}^{m-1} - n_{k_j -N}^{m-1}\,\leq n_{k_j}^{m-1}.
$$
Since $k_j\in J_j$, we have $n_{k_j}\leq j$, hence
$$
\sum_{k\in J_j}
n_k^{m-1} -n_{k-1}^{m-1}\,\leq j^{m-1}.
$$
On the other hand,  we have $j\leq 2n_k$ for any $k\in J_j$, hence
$$
\sum_{k\in J_j}
\frac{n_k^{m-1} -n_{k-1}^{m-1}}{n_k^{m-1}}\, \leq \, \Bigl(\frac{2}{j}\Bigr)^{m-1}\,
\sum_{k\in J_j}
n_k^{m-1} -n_{k-1}^{m-1}\,
\leq \, 2^{m-1}.
$$
We immediatly deduce that
$$
\Lambda_j\leq 4^{m-1}.
$$
In the case when $m=0$, we have similarly
$$
\sum_{k\in J_j}
\frac{n_{k-1}^{-1} -n_{k}^{-1}}{n_k^{-1}}\, \leq \,j\,
\sum_{k\in J_j} n_{k-1}^{-1} -n_{k}^{-1}\\
\leq \,j\,
n_{k_j-N}^{-1}\leq 2.
$$
Hence $\Lambda_j\leq 4$ in this case. Lastly, it is plain that if $m=1$, we have $\Lambda_j=0$.

This shows that in all cases, we have an estimate
$$
\sum_{k=1}^{\infty}\bigl\vert c_k(x)\bigr\vert^2\,\leq\,
{K'_m}^2\,\Phi_m(x)^2.
$$
Now recall (\ref{3Decomp}). Combining the above estimate with (\ref{3Alpha}) and (\ref{3Beta}), we obtain
that
\begin{align*}
\sum_{k=1}^{\infty}
\bigl\vert A_{n_k}(x) -A_{n_{k-1}}(x)\bigr\vert^2\,\leq &\, 3\Bigl(
\sum_{k=1}^{\infty} \bigl\vert a_k(x)\bigr\vert^2\, +\,
\sum_{k=1}^{\infty} \bigl\vert b_k(x)\bigr\vert^2\, +\,
\sum_{k=1}^{\infty} \bigl\vert c_k(x)\bigr\vert^2\Bigr)\\
&\leq \bigl(6K_m^2+{K'_m}^2\bigr)\,\Phi_m(x)^2.
\end{align*}
Since the upper bound does not depend on the sequence $(n_k)_{k\geq 0}$, this estimate and
Proposition \ref{3SFE} imply that the sequence
$(A_n(x))_{n\geq 0}$ belongs to $L^p(\Omega;v^2)$, and that we have an estimate
\begin{equation}\label{3A}
\bignorm{\bigl(A_n (x)\bigr)_{n\geq 1}}_{L^p(v^2)}\,\lesssim\,\norm{x}_p,\qquad x\in L^p(\Omega).
\end{equation}

We will now apply a similar treatment to the sequence $(B_n)_n$. According to (\ref{31}), we can write
$$
B_n=n^m\,\sum_{j=n}^{2n} \Delta_{j}^{m+1}.
$$
For any $k\geq 1$, we set
\begin{align*}
\alpha_k & = n_k^{m}\,\sum_{j=2n_{k-1}+1}^{2n_k} \Delta_{j}^{m+1},\\
\beta_k  & = -n_{k-1}^{m}\,\sum_{j=n_{k-1}}^{n_k -1} \Delta_{j}^{m+1},\\
\gamma_k  & = \Bigl(n_k^{m} -n_{k-1}^{m}\Bigr)\,\sum_{j=n_{k}}^{2n_{k-1}}
\Delta_{j}^{m+1}
\end{align*}
if $2n_{k-1}\geq n_k$, and
\begin{align*}
\alpha_k & = n_k^{m}\,\sum_{j=n_k}^{2n_k} \Delta_{j}^{m+1},\\
\beta_k  & = -n_{k-1}^{m}\,\sum_{j=n_{k-1}}^{2n_{k -1}}  \Delta_{j}^{m+1},\\
\gamma_k  & = 0
\end{align*}
if $2n_{k-1} < n_k$.

Arguing as above, we obtain that for any $x\in L^p(\Omega)$, we have
$$
\bigl\vert\alpha_k(x)\bigr\vert\,\leq \,
n_k^{m}\Bigl(\sum_{j=n_{k}}^{2n_k} (j+1)^{-1-2m}\Bigr)^{\frac{1}{2}}\,
\Bigl(\sum_{j=2n_{k-1}+1}^{2n_k} (j+1)^{2m+1}\bigl\vert \Delta_{j}^{m+1}(x)\bigr\vert^2
\Bigr)^{\frac{1}{2}},
$$
and then
$$
\sum_{k=1}^{\infty}\bigl\vert \alpha_k(x)\bigr\vert^2\,\leq\,
K_{m+1}^2 \,\Phi_m(x)^2.
$$
Likewise we have
$$
\sum_{k=1}^{\infty}\bigl\vert \beta_k(x)\bigr\vert^2\,\leq\,
K_{m+1}^2 \,\Phi_m(x)^2,
$$
as well as an estimate
$$
\sum_{k=1}^{\infty}\bigl\vert \gamma_k(x)\bigr\vert^2\,\leq\,
K^{'2}_{m+1}\,\Phi_m(x)^2.
$$
These three inequalities imply that the sequence
$(B_n(x))_{n\geq 0}$ belongs to $L^p(\Omega;v^2)$, and that we have an estimate
\begin{equation}\label{3B}
\bignorm{\bigl(B_n (x)\bigr)_{n\geq 1}}_{L^p(v^2)}\,\lesssim\,\norm{x}_p,\qquad x\in L^p(\Omega).
\end{equation}

We can now conclude our proof. Recall that $q>2$, so that $v^2\subset v^q$.
Then using (\ref{3Induction1}), (\ref{3Induction2}), (\ref{3A}), (\ref{3B})
and Lemma \ref{3Mult}, it follows from the decomposition formula (\ref{32}) that for any $x\in L^p(\Omega)$,
$\bigl(n^m\Delta_{2n+1}^m(x)\bigr)_{n\geq 1}$ belongs to $L^p(\Omega;v^q)$ and that we have an estimate
$$
\bignorm{\bigl(n^m\Delta_{2n+1}^m(x)\bigr)_{n\geq 1}}_{L^p(v^q)}\,\lesssim\,\norm{x}_p.
$$
Since $n\Delta_{n}^{m}= n^{m}\Delta_{2n+1}^{m} - B_{n}$, a second application of (\ref{3B}) yields (\ref{3Delta}).
\end{proof}

The following is an analog of Theorem \ref{3Main} for continuous semigroups.

\begin{corollary}\label{3Continuous}
Let $(T_t)_{t\geq 0}$ be a bounded analytic semigroup on $L^p(\Omega)$
and assume that $T_t\colon L^p(\Omega)\to L^p(\Omega)$ is contractively regular for any
$t\geq 0$. Let $2<q<\infty$ and let $x\in L^p(\Omega)$.
Then for a.e. $\lambda\in\Omega$, the family
$\bigl([T_t (x)](\lambda)\bigr)_{t>0}$ belongs to $V^q$ and we have an estimate
\begin{equation}\label{3Cont1}
\Bignorm{\lambda\mapsto \bignorm{\bigl([T_t(x)](\lambda)\bigr)_{t> 0}}_{V^q}}_{p}\,\lesssim\,\norm{x}_p.
\end{equation}
More generally, for any
integer $m\geq 0$, the family
$\Bigl(t^m \frac{\partial^m}{\partial t^m}\bigl(T_t(x)\bigr)(\lambda)\Bigr)_{t>0}$
belongs to $V^q$ for a.e. $\lambda\in\Omega$ and we have an estimate
\begin{equation}\label{3Cont2}
\Bignorm{\bignorm{\lambda \mapsto \Bigl(t^m 
\frac{\partial^m}{\partial t^m}\bigl(T_t(x)\bigr)(\lambda)\Bigr)_{t>0}}_{V^q}}_p
\,\lesssim\,\norm{x}_p,\qquad x\in L^p(\Omega).
\end{equation}
\end{corollary}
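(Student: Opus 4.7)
The plan is to mirror the proof of Corollary \ref{2Continuous}: reduce both estimates to the discrete Theorem \ref{3Main} applied to $T_\alpha$ for small $\alpha>0$, then extract the continuous $V^q$ statement via Lemma \ref{6Lem2}. The key preliminary observation is that for every $\alpha>0$, the operator $T_\alpha$ is contractively regular (by hypothesis) and is discretely analytic with a constant that is uniform in $\alpha$: indeed, using that $T_s$ sends $L^p(\Omega)$ into $\mathrm{dom}(A)$ with $\norm{sAT_s}\leq C_1$ (see (\ref{3Anal1})), we have
\[
\norm{T_\alpha^n - T_\alpha^{n-1}}\,=\,\Bignorm{\int_{(n-1)\alpha}^{n\alpha} AT_s\, ds}\,\leq\,C_1 \log\frac{n}{n-1}\,\lesssim\,\frac{1}{n}.
\]
Theorem \ref{3Main} therefore supplies, for every integer $m\geq 0$, a bound $\bignorm{\bigl(n^m\Delta_n^m(T_\alpha)x\bigr)_{n\geq 1}}_{L^p(v^q)}\lesssim \norm{x}_p$ with implicit constant independent of $\alpha$.

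Next, fix $x\in L^p(\Omega)$ and an increasing finite sequence $t_0<t_1<\cdots<t_N$ of positive reals. Strong continuity of $(T_t)_{t\geq 0}$ allows us to pick $\alpha>0$ small and integers $n_0<n_1<\cdots<n_N$ with $\norm{T_{t_k}x-T_\alpha^{n_k}x}_p$ arbitrarily small for all $k$. Applying the $m=0$ discrete estimate to these samples and passing to the limit yields the uniform bound $\bignorm{(T_{t_k}x)_{0\leq k\leq N}}_{L^p(\Omega; v^q_N)}\lesssim \norm{x}_p$. For (\ref{3Cont2}) the same scheme applies, with the discrete object $n_k^m\Delta_{n_k}^m(T_\alpha)x$ playing the role of $t_k^m A^m T_{t_k}x = (-1)^m t_k^m\frac{\partial^m}{\partial t^m}T_{t_k}x$; the required $L^p$-approximation follows from the expansion $(T_\alpha-I)^m = \alpha^m A^m + O(\alpha^{m+1})$ on $\mathrm{dom}(A^{m+1})$ combined with the uniform bound on $\norm{\Delta_n^m(T_\alpha)}$, and is propagated to all of $L^p(\Omega)$ by density.

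Lemma \ref{6Lem2} then delivers (\ref{3Cont1}) and (\ref{3Cont2}) once we know that, for a.e. $\lambda\in\Omega$, the functions $t\mapsto [T_tx](\lambda)$ and $t\mapsto [t^m A^m T_tx](\lambda)$ are continuous on $(0,\infty)$. This is the main technical obstacle. For $x$ in the dense subspace $\mathrm{dom}(A^{m+1})$, pointwise absolute continuity on compact subintervals of $(0,\infty)$ follows directly from Fubini applied to the Bochner identities $T_tx - T_sx = \int_s^t AT_ux\, du$ and its analogs for $t^m A^m T_tx$. The extension to arbitrary $x\in L^p(\Omega)$ is a standard density argument relying on the maximal ergodic inequality of \cite{LMX} to control $\sup_{t>0}\bigl\vert t^m A^m T_tx\bigr\vert$ in $L^p$ and thereby transfer the a.e. pointwise continuity from the dense subspace to all $x$.
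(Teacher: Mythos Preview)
Your approach is essentially the same as the paper's: discretize via $T_\alpha$, apply Theorem \ref{3Main} uniformly in $\alpha$, approximate the continuous samples by discrete ones, and invoke Lemma \ref{6Lem2}. The only differences are in the supporting arguments --- the paper obtains the uniform discrete estimate by citing the uniform square-function bound from the proof of \cite[Cor.~4.2]{LMX} and then re-running the proof of Theorem \ref{3Main}, rather than via your uniform-analyticity computation, and it obtains a.e.\ continuity of $t\mapsto t^m\partial_t^m[T_t(x)](\lambda)$ by citing \cite[Lemma, p.~72]{S2} rather than via your density-plus-maximal-inequality argument.
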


\begin{proof} Let $m\geq 0$ be an integer. It follows from \cite[Lemma, p. 72]{S2} 
that for any $x\in L^p(\Omega)$, the function
$$
t\mapsto t^m \frac{\partial^m}{\partial t^m}\bigl( T_t(x)\bigr)(\lambda)
$$
is continuous for a.e. $\lambda\in \Omega$.

Next it follows from the proof of \cite[Cor. 4.2]{LMX} that
there exists a constant $C_m>0$ such that
$$
\Bignorm{\Bigl(\sum_{n=0}^{\infty} (n+1)^{2m+1}\bigl\vert T_t^n(T_t-I)^m(x)
\bigr\vert^2\Bigr)^{\frac{1}{2}}}_p\,\leq\,C_m \,\norm{x}_p
$$
for any $t>0$ and any $x\in L^p(\Omega)$. That is, the operators $T_t$ satisfy Proposition
\ref{3SFE} uniformly. Since they also satisfy Theorem \ref{2Main} uniformly,
it follows from the  proof
of Theorem \ref{3Main} that they satisfy the estimate (\ref{3Delta}) uniformly.
Hence using an approximation argument as in the proof of \cite[Cor. 4.2]{LMX}, we deduce that
for any $x\in L^p(\Omega)$ and for any $0<t_0<t_1<\cdots<t_m$, we have
$$
\bignorm{\bigl(T_{t_0}(x),T_{t_1}(x),\ldots,T_{t_m}(x) \bigr)}_{L^p(\Omega; v^q_m)}\,\leq C.
$$
The result therefore follows from Lemma \ref{6Lem2}.
\end{proof}

\medskip
\section{Additional properties.}

We give here further properties of contractively regular operators
and contractively regular semigroups, in connection with variational inequalities.
Let $(\Omega,\mu)$ be a measure space and let $1<p<\infty$.

\medskip
{\it 5.1. Individual ergodic theorems}

\smallskip
Let $T\colon L^p(\Omega)\to L^p(\Omega)$ be a contraction. According to the
Mean Ergodic Theorem, we have a direct sum decomposition
$$
L^p(\Omega)\,=\,N(I-T)\oplus\overline{R(I-T)},
$$
where $N(\cdotp)$ and $R(\cdotp)$ denote the kernel and the range, respectively.
Moreover if we let $P_T\colon L^p(\Omega)\to L^p(\Omega)$ denote the
corresponding projection onto $N(I-T)$, then
\begin{equation}\label{7Mean}
M_n(T)x\,\stackrel{L^p}{\longrightarrow}\, P_T(x)
\end{equation}
for any $x\in L^p(\Omega)$. It is well-known that if $T$ is an absolute
contraction, then $M_n(T)x\to P_T(x)$ almost everywhere (see e.g. \cite[Section VIII.6]{DS}).
We extend this classical result, as follows.

\begin{corollary}
Assume that $T\colon L^p(\Omega)\to L^p(\Omega)$ is contractively regular.
Then for any $x\in L^p(\Omega)$,
$$
[M_n(T)x](\lambda)\,\longrightarrow\, [P_T(x)](\lambda)\qquad \hbox{for a.e.}\  \lambda\in\Omega.
$$
\end{corollary}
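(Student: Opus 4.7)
The plan is to combine the variational inequality from Theorem \ref{2Main} with the Mean Ergodic Theorem recalled in (\ref{7Mean}) just above the statement.

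First I would fix any $q\in(2,\infty)$ and apply Theorem \ref{2Main} to $T$, which yields
$$
\bignorm{(M_n(T)x)_{n\geq 0}}_{L^p(v^q)}\,\leq\, C_{p,q}\,\norm{x}_p.
$$
By Lemma \ref{6Lem1}, the function $\lambda\mapsto \bignorm{(M_n(T)x(\lambda))_{n\geq 0}}_{v^q}$ belongs to $L^p(\Omega)$, and in particular is finite almost everywhere. Hence for a.e. $\lambda\in\Omega$, the sequence $([M_n(T)x](\lambda))_{n\geq 0}$ lies in $v^q$. As recorded at the beginning of Section 2, every element of $v^q$ is a convergent sequence, so the pointwise limit
$$
g(\lambda)\,=\,\lim_{n\to\infty}[M_n(T)x](\lambda)
$$
exists for a.e. $\lambda\in\Omega$ and defines a measurable function on $\Omega$.

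It remains to identify $g$ with $P_T(x)$ almost everywhere. Since $T$ is contractively regular, it is in particular a contraction on $L^p(\Omega)$ (because $\norm{T}\leq \norm{T}_r\leq 1$), and $L^p(\Omega)$ is reflexive for $1<p<\infty$; hence the Mean Ergodic Theorem applies and $M_n(T)x\to P_T(x)$ in $L^p$-norm, as recalled in (\ref{7Mean}). I would then extract a subsequence $(M_{n_k}(T)x)_k$ which converges to $P_T(x)$ almost everywhere. That very subsequence also converges a.e. to $g$ by the first step, so $g=P_T(x)$ almost everywhere, which is exactly the desired conclusion.

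The only subtle point is the identification of the pointwise limit: the variational estimate by itself ensures only the existence of \emph{some} measurable a.e. limit, and it is the interplay with the $L^p$-convergence provided by the Mean Ergodic Theorem that pins this limit down as $P_T(x)$. Apart from this, the argument is a direct consequence of Theorem \ref{2Main} and Lemma \ref{6Lem1}, and needs no additional analytic or ergodic input.
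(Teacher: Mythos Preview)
Your argument is correct and follows the same route as the paper: apply Theorem \ref{2Main} to get that $([M_n(T)x](\lambda))_{n\geq 0}\in v^q$ for a.e.\ $\lambda$, deduce a.e.\ convergence, and then identify the limit via the Mean Ergodic Theorem (\ref{7Mean}). The paper simply writes ``Combining with (\ref{7Mean}), we obtain the result'' where you spell out the subsequence extraction, but the substance is identical.
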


\begin{proof} Let $x\in L^p(\Omega)$ and let $2<q<\infty$. According to Theorem \ref{2Main}, the sequence
$\bigl([M_n(T)x](\lambda)\bigr)_{n\geq 0}$ belongs to $v^q$ for almost every $\lambda\in\Omega$.
Hence $\bigl([M_n(T)x](\lambda)\bigr)_{n\geq 0}$ converges for almost every $\lambda\in\Omega$.
Combining with (\ref{7Mean}), we obtain the result.
\end{proof}

If a contraction $T\colon L^p(\Omega)\to L^p(\Omega)$ is analytic, then
$\norm{T^n-T^{n+1}}\to 0$, hence $T^n(x)\to 0$ for any $x\in R(I-T)$.
Consequently,
$$
T^n(x)\,\stackrel{L^p}{\longrightarrow}\, P_T(x)
$$
for any $x\in L^p(\Omega)$. Using Theorem \ref{3Main} and arguing as above, we obtain the following.

\begin{corollary}\label{72}
Let $T\colon L^p(\Omega)\to L^p(\Omega)$ be a contractively regular operator and assume that $T$ is analytic.
Then for any $x\in L^p(\Omega)$,
$$
[T^n(x)](\lambda)\,\longrightarrow\, [P_T(x)](\lambda)\qquad \hbox{for a.e.}\  \lambda\in\Omega.
$$
\end{corollary}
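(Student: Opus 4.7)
The plan is to imitate the proof of the preceding corollary, replacing the ergodic averages $M_n(T)$ by $T^n$ itself and replacing the appeal to Theorem \ref{2Main} by an appeal to Theorem \ref{3Main}. Concretely, I fix $x\in L^p(\Omega)$ and any $q$ with $2<q<\infty$, and I invoke inequality \eqref{3Tn} of Theorem \ref{3Main} to conclude that $(T^n(x))_{n\geq 0}\in L^p(\Omega;v^q)$. By Lemma \ref{6Lem1}, this means that the scalar sequence $([T^n(x)](\lambda))_{n\geq 0}$ lies in $v^q$ for a.e. $\lambda\in\Omega$. Since every element of $v^q$ is a convergent sequence (as recalled in the discussion at the beginning of Section 2), this already yields a pointwise almost everywhere limit, call it $y(\lambda)$.

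It then remains to identify $y$ with $P_T(x)$. For this I use the remark made just before the statement: the analyticity hypothesis $\sup_{n\geq 1}n\norm{T^n-T^{n-1}}<\infty$ implies $\norm{T^n-T^{n+1}}\to 0$, so that $T^n(x')\to 0$ in $L^p$ for every $x'\in R(I-T)$, and hence for every $x'\in\overline{R(I-T)}$ by a standard $3\varepsilon$ argument using $\sup_n\norm{T^n}<\infty$. Combined with $T^n=\mathrm{Id}$ on $N(I-T)$ and the mean ergodic decomposition $L^p(\Omega)=N(I-T)\oplus\overline{R(I-T)}$, this gives $T^n(x)\to P_T(x)$ in $L^p(\Omega)$.

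To conclude, I extract from the $L^p$-convergence a subsequence $(T^{n_k}(x))_k$ which converges to $P_T(x)$ for a.e. $\lambda\in\Omega$. Since the full sequence $([T^n(x)](\lambda))_n$ converges a.e. to $y(\lambda)$, the subsequence must converge to the same limit $y(\lambda)$ a.e.; thus $y=P_T$ a.e., which is exactly the claimed statement. There is no real obstacle here: the only substantive ingredient is the strong $q$-variation estimate of Theorem \ref{3Main}, and the proof is essentially a short combination of that estimate with the mean ergodic theorem, exactly parallel to the corollary immediately preceding it.
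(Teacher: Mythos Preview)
Your proof is correct and follows exactly the approach in the paper: the paper establishes the $L^p$-convergence $T^n(x)\to P_T(x)$ from analyticity and the mean ergodic decomposition in the paragraph preceding the corollary, and then simply says ``Using Theorem \ref{3Main} and arguing as above, we obtain the following,'' which is precisely the argument you have written out (a.e.\ convergence from membership in $v^q$, identification of the limit via a subsequence).
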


\bigskip
We now consider the continuous case. The situation is essentially similar, except that
we can also consider the behaviour when the parameter $t$ tends to $0^+$.
Let $T=(T_t)_{t\geq 0}$ be a strongly continuous semigroup
of contractions. By definition, for any $x\in L^p(\Omega)$, $T_t(x)\to x$ in the $L^p$-norm 
when $t\to 0^+$. 
This implies that $M_t(T)x\to x$ when $t\to 0^+$.
Let $A$ denote the infinitesimal generator of $T=(T_t)_{t\geq 0}$. As in the discrete case,
we have a direct sum decomposition
$$
L^p(\Omega)\,=N(A)\oplus \overline{R(A)}.
$$
Moreover if we let $P_A\colon L^p(\Omega)\to L^p(\Omega)$ denote the
corresponding projection onto $N(A)$, then
$$
M_t(T)x\,\stackrel{L^p}{\longrightarrow}\, P_A(x)\quad\hbox{when}\ t\to\infty
$$
for any $x\in L^p(\Omega)$. If further $(T_t)_{t\geq 0}$ is a bounded analytic semigroup,
then
$$
T_t(x)\,\stackrel{L^p}{\longrightarrow}\, P_A(x)
$$
for any $x\in L^p(\Omega)$.

Now applying Corollaries \ref{2Continuous} and \ref{3Continuous}, we deduce
the following individual ergodic theorems.

\begin{corollary}\label{74}
Let $T=(T_t)_{t\geq 0}$ be a strongly continuous semigroup of contractively
regular operators on $L^p(\Omega)$, and
let $x\in L^p(\Omega)$. Then for almost every $\lambda\in\Omega$,
$$
[M_t(T)x](\lambda)\,\longrightarrow\, [P_A(x)](\lambda)\quad\hbox{when}\ t \to\infty
$$
and
$$
[M_t(T)x](\lambda)\,\longrightarrow\, x(\lambda)\quad\hbox{when}\ t\, \to 0^+.
$$
\end{corollary}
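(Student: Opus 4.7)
The plan is to deduce the almost everywhere convergences directly from the variational inequality of Corollary \ref{2Continuous}, combined with the basic structural fact (recorded in Section 2) that every element of $V^q$ admits a limit as $t\to 0^+$ and as $t\to\infty$.

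First I would fix some $q\in(2,\infty)$ and apply Corollary \ref{2Continuous} to the given $x\in L^p(\Omega)$. This yields that $\bignorm{\bigl([M_t(T)x](\lambda)\bigr)_{t>0}}_{V^q}<\infty$ for a.e. $\lambda\in\Omega$. By the observation made at the beginning of Section 2 (continuous case), any element of $V^q$ has a limit at $0^+$ and at $\infty$. Thus there exists a set $\Omega_0\subset\Omega$ of full measure such that for every $\lambda\in\Omega_0$ both $\lim_{t\to 0^+}[M_t(T)x](\lambda)$ and $\lim_{t\to\infty}[M_t(T)x](\lambda)$ exist in $\Cdb$.

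It then remains to identify these two pointwise limits. For the limit at infinity, the mean ergodic theorem stated just before the corollary gives $M_t(T)x\to P_A(x)$ in $L^p$-norm as $t\to\infty$. Extracting a sequence $t_n\to\infty$ along which $M_{t_n}(T)x\to P_A(x)$ pointwise a.e., and intersecting with $\Omega_0$, forces the a.e. pointwise limit to coincide with $P_A(x)$. For the limit at $0^+$, strong continuity of the semigroup implies $T_s(x)\to x$ in $L^p$ as $s\to 0^+$, and averaging gives $M_t(T)x\to x$ in $L^p$ as $t\to 0^+$; the same subsequential extraction then identifies the a.e. pointwise limit with $x$.

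There is no real obstacle here beyond the standard subsequence argument used to match an $L^p$-limit with an a.e. pointwise limit; the serious work was already done in Corollary \ref{2Continuous}. The only minor point worth stating carefully is that the exceptional null set depends only on $x$ (and implicitly on the choice of $q$, but any fixed $q\in(2,\infty)$ works), so that a single null set serves for both the $t\to 0^+$ and the $t\to\infty$ conclusions.
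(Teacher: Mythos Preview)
Your proof is correct and follows exactly the approach the paper indicates: it applies Corollary \ref{2Continuous}, uses the fact from Section 2 that elements of $V^q$ admit limits at $0^+$ and at $\infty$, and then identifies these a.e.\ limits via the $L^p$-convergences recorded in the paragraph preceding the corollary. This mirrors the explicit argument given for the discrete analogue (Corollary 5.1), which is precisely what the paper intends when it says the result follows from Corollary \ref{2Continuous}.
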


\begin{corollary}\label{74}
Let $T=(T_t)_{t\geq 0}$ be a bounded analytic semigroup and
assume that $T_t$ is  contractively
regular for any $t\geq 0$. Let $x\in L^p(\Omega)$.
Then for almost every $\lambda\in\Omega$,
$$
[T_t(x)](\lambda)\,\longrightarrow\, [P_A(x)](\lambda)\quad\hbox{when}\ t\to\infty
$$
and
$$
[T_t(x)](\lambda)\,\longrightarrow\, x(\lambda)\quad\hbox{when}\ t\to 0^+.
$$
\end{corollary}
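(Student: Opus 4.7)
The plan is to reduce both a.e.\ convergence statements to the fact, recalled at the beginning of Section~2, that every element of $V^q$ automatically has limits at $0^+$ and at $\infty$, and then identify these pointwise limits with the already known $L^p$-limits.

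First, I would fix some $q \in (2,\infty)$ and invoke Corollary \ref{3Continuous} (the case $m=0$): for the given $x \in L^p(\Omega)$ there is a $\mu$-null set $N \subset \Omega$ such that for every $\lambda \notin N$ the family $\bigl([T_t(x)](\lambda)\bigr)_{t>0}$ belongs to $V^q$. By the remark made just after the definition of $V^q$ in Section~2, any family in $V^q$ admits one-sided limits at $0^+$ and at $\infty$. Hence for every $\lambda \notin N$ both
$$
L_0(\lambda) := \lim_{t\to 0^+}[T_t(x)](\lambda) \qquad \text{and}\qquad L_\infty(\lambda) := \lim_{t\to\infty}[T_t(x)](\lambda)
$$
exist in $\Cdb$.

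Next I would identify these limits. As recalled in the paragraph preceding the statement, the bounded analyticity of $(T_t)_{t\geq 0}$, combined with the mean ergodic decomposition, gives $T_t(x) \to P_A(x)$ in $L^p(\Omega)$ as $t\to\infty$, and the strong continuity of the semigroup gives $T_t(x)\to x$ in $L^p(\Omega)$ as $t\to 0^+$. Since $L^p$-convergence implies a.e.\ convergence along a subsequence, one can extract sequences $t_k\to\infty$ and $s_k\to 0^+$ along which $[T_{t_k}(x)](\lambda)\to [P_A(x)](\lambda)$ and $[T_{s_k}(x)](\lambda)\to x(\lambda)$ for a.e.\ $\lambda$. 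Because the full limits $L_\infty(\lambda)$ and $L_0(\lambda)$ already exist off $N$, they must coincide with these subsequential limits, so $L_\infty(\lambda) = [P_A(x)](\lambda)$ and $L_0(\lambda) = x(\lambda)$ for a.e.\ $\lambda\in\Omega$, which is exactly the conclusion.

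The proof contains no real obstacle once Corollary \ref{3Continuous} is available; the only small point to be careful about is precisely the passage from $L^p$-convergence to a.e.\ convergence of the full family $(T_t(x))_{t>0}$. One cannot hope to do this directly from the $L^p$ statement, but here the strong $q$-variation bound furnishes the missing regularity: it guarantees the existence of the pointwise limits, so that the $L^p$ limits (which are uniquely determined and agree with subsequential a.e.\ limits) automatically coincide with them.
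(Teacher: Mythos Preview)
Your proof is correct and follows essentially the same approach as the paper: the paper simply remarks that applying Corollary~\ref{3Continuous} (and the fact that elements of $V^q$ have limits at $0^+$ and at $\infty$) together with the stated $L^p$-convergence yields the result, and your argument carries this out explicitly, including the standard subsequence step needed to identify the pointwise limits with the $L^p$ limits.
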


\bigskip
{\it 5.2. The case $q=2$}

\smallskip
In this section,
we fix a increasing sequence $(n_k)_{k\geq 0}$ of integers, with $n_0=0$. Given any sequence
$(a_n)_{n\geq 0}$ of complex numbers, we define the so-called oscillation norm
$$
\norm{(a_n)_{n\geq 0}}_{o^2}\,=\, \bigl(\vert a_0\vert^2\, +\,\sum_{k\geq 0}\,\max_{n_{k}\leq n,m\leq n_{k+1}}
\vert a_{n}-a_{m}\vert^2 \bigr)^{\frac{1}{2}},
$$
and we let $o^2$ denote the Banach space of all sequences with a finite oscillation norm, equipped with
$\norm{\ }_{o^2}$. This space (whose definition depends on the sequence $(n_k)_{k\geq 0}$) was used
in \cite{B,JKRW,JR} as a substitute to $v^q$ in the case $q=2$ (see also \cite{Ga}). Indeed,
neither Theorem \ref{2Main} nor Theorem \ref{3Main} holds true for $q=2$, see \cite{JKRW}
and  \cite[Section 8]{JW}.

Recall the shift operator $s_p\colon \ell^p_{\footnotesize{\Zdb}}\to \ell^p_{\footnotesize{\Zdb}}$
for any $1<p<\infty$ (see Section 3).
According to \cite[Thm. A]{JKRW}, there is a constant $C_{p,2}$ such that
$$
\bignorm{\bigl(M_n(s_p)c\bigr)_{n\geq 0}}_{L^p(o^2)}\,\leq\,C_{p,2}\,\norm{c}_p
$$
for any $c\in \ell^p_{\footnotesize{\Zdb}}$.
Hence arguing as in the proof of Theorem \ref{2Main}, we obtain the following $o^2$-version of the latter
statement.

\begin{theorem}\label{41} Let $T\colon L^p(\Omega)\to L^p(\Omega)$ be a contractively regular operator,
with $1<p<\infty$. Then we have
$$
\bignorm{\bigl(M_n(T)x\bigr)_{n\geq 0}}_{L^p(o^2)}\,\leq\,C_{p,2}\,\norm{x}_p,\qquad x\in L^p(\Omega).
$$
\end{theorem}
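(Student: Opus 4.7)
The plan is to mimic the proof of Theorem \ref{2Main} verbatim, substituting $o^2$ for $v^q$ throughout and using JKRW's $o^2$-inequality for the shift (which the authors already quote just above the statement) in place of the $v^q$-inequality. First, I would fix $m\geq 1$ and introduce a finite-dimensional analog $o^2_m$ of $o^2$, defined by
$$
\bignorm{(a_0,\ldots,a_m)}_{o^2_m}\,=\,\bignorm{(a_0,\ldots,a_m,a_m,a_m,\ldots)}_{o^2},
$$
so that only those blocks $[n_k,n_{k+1}]$ with $n_k\leq m$ contribute; this makes $o^2_m$ a finite-dimensional Banach space, exactly the role played by $v^q_m$ in Section 2.

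Next, apply the Akcoglu--Peller--Coifman--Rochberg--Weiss dilation recalled at the start of the proof of Theorem \ref{2Main}: write $T^k=QU^kJ$ with $J,Q$ positive contractions between $L^p(\Omega)$ and $L^p(\widehat\Omega)$ and $U$ an invertible isometry on $L^p(\widehat\Omega)$ satisfying $\norm{U^j}_r=1$ for every $j\in\Zdb$. Since $M_n(T)=QM_n(U)J$, the regularity bound $\norm{Q}_r\leq 1$ together with (\ref{2Reg}) applied to $E=o^2_m$ yields
$$
\bignorm{(M_n(T)x)_{0\leq n\leq m}}_{L^p(\Omega;o^2_m)}\,\leq\,
\bignorm{(M_n(U)J(x))_{0\leq n\leq m}}_{L^p(\widehat\Omega;o^2_m)}.
$$

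I would then apply the transference Lemma \ref{2Transfer} with $E=o^2_m$ and with the same coefficients $a(n)\in\ell^1_{\Zdb}$, $a(n)_j=(n+1)^{-1}\mathbf 1_{\{0\leq j\leq n\}}$, used in the proof of Theorem \ref{2Main}; these are chosen precisely so that $\sum_j a(n)_j U^j=M_n(U)$ and $\sum_j a(n)_j s_p^j=M_n(s_p)$. Combined with JKRW's Theorem A (quoted just before the statement), which gives the uniform bound
$$
\bignorm{(M_n(s_p)c)_{n\geq 0}}_{\ell^p_{\Zdb}(o^2)}\,\leq\,C_{p,2}\,\norm{c}_p,
$$
this produces $\bignorm{(M_n(U)z)_{0\leq n\leq m}}_{L^p(\widehat\Omega;o^2_m)}\leq C_{p,2}\norm{z}_p$ and hence $\bignorm{(M_n(T)x)_{0\leq n\leq m}}_{L^p(\Omega;o^2_m)}\leq C_{p,2}\norm{x}_p$ with a constant independent of $m$.

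Finally, to pass from this uniform-in-$m$ bound on truncations to the full inequality stated in Theorem \ref{41}, I would invoke the $o^2$-analog of Lemma \ref{6Lem1}: since the oscillation norm is, after squaring, a sum of non-negative terms, extending a finite sequence by its last value gives a norm that increases monotonically to $\norm{(a_n)_{n\geq 0}}_{o^2}$, so Fatou / monotone convergence yields $\norm{(M_n(T)x)_{n\geq 0}}_{L^p(o^2)}=\lim_{m\to\infty}\norm{(M_n(T)x)_{0\leq n\leq m}}_{L^p(o^2_m)}$. I do not anticipate any real obstacle: the entire argument is a mechanical transcription of the proof of Theorem \ref{2Main}, and the only point that is not literally identical is the verification of this monotone-truncation property for $o^2$, which is immediate from its definition as a weighted $\ell^2$-sum of block oscillations.
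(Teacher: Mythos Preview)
Your proposal is correct and is exactly the approach the paper takes: the paper's proof of Theorem \ref{41} consists solely of the sentence ``arguing as in the proof of Theorem \ref{2Main}'', and you have spelled out precisely that transcription, using JKRW's Theorem A for the shift as the input in place of (\ref{2Cpq}).
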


We also have an $o^2$-version of Theorem \ref{3Main}, as follows.

\begin{theorem}\label{42}
Let $T\colon L^p(\Omega)\to L^p(\Omega)$ be a contractively regular
operator, with $1<p<\infty$, and assume that $T$ is analytic. Then we have an estimate
\begin{equation}\label{4Tn}
\bignorm{\bigl(T^n (x)\bigr)_{n\geq 0}}_{L^p(o^2)}\,\lesssim\,\norm{x}_p,\qquad x\in L^p(\Omega).
\end{equation}
More generally, for any integer $m\geq 0$,
we have an estimate
\begin{equation}\label{4Delta}
\bignorm{\bigl(n^m\Delta_n^m (x)\bigr)_{n\geq 1}}_{L^p(o^2)}\,\lesssim\,\norm{x}_p,\qquad x\in L^p(\Omega).
\end{equation}
\end{theorem}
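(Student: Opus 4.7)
The plan is to mimic the proof of Theorem \ref{3Main}, with the oscillation space $o^2$ replacing $v^q$ throughout. Almost every step of that proof translates verbatim, and only two supplementary observations are needed.

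The first is the continuous embedding $v^2 \hookrightarrow o^2$ with respect to the fixed sequence $(n_k)$: given $(a_n)\in v^2$, for each $k$ choose $p_k\leq q_k$ in $[n_k,n_{k+1}]$ realizing the oscillation on that interval. Since $q_k\leq n_{k+1}\leq p_{k+1}$, the interleaved sequence $p_0,q_0,p_1,q_1,\ldots$ is increasing, and therefore
\[
\sum_{k\geq 0}\bigl|a_{q_k}-a_{p_k}\bigr|^2\,\leq\,\norm{(a_n)}_{v^2}^2,
\]
which yields $\norm{(a_n)}_{o^2}\leq \norm{(a_n)}_{v^2}$ and hence $L^p(\Omega;v^2)\hookrightarrow L^p(\Omega;o^2)$ with constant one. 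The second is an $o^2$-version of Lemma \ref{3Mult}: for $(\delta_n)\in v^1$ and $(z_n)\in L^p(\Omega;o^2)$,
\[
\bignorm{(\delta_n z_n)_{n\geq 0}}_{L^p(o^2)}\,\lesssim\,\norm{(\delta_n)}_{v^1}\,\bignorm{(z_n)_{n\geq 0}}_{L^p(o^2)}.
\]
The starting point is $|\delta_n z_n-\delta_m z_m|\leq \norm{\delta}_\infty |z_n-z_m|+|z_m|\,|\delta_n-\delta_m|$, combined with $\norm{\delta}_\infty\leq 2\norm{(\delta_n)}_{v^1}$, the summability $\sum_k(\mathrm{osc}_k\delta)^2\leq \norm{(\delta_n)}_{v^1}^2$, and the pointwise interval estimate $\max_{n\in[n_k,n_{k+1}]}|z_n(\lambda)|\leq |z_{n_k}(\lambda)|+\mathrm{osc}_k z(\lambda)$, followed by H\"older and integration.

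With these two observations in hand, the induction on $m\geq -1$ proceeds exactly as in the proof of Theorem \ref{3Main}. Setting $\Delta_n^{-1}=nM_{n-1}(T)$, the base case is Theorem \ref{41}. At the inductive step the algebraic identity (\ref{32}) is purely formal and is reused verbatim. The auxiliary sequences $(A_n)$ and $(B_n)$ were shown there to satisfy the $L^p(\Omega;v^2)$-estimates (\ref{3A}) and (\ref{3B}) via the square-function inequality of Proposition \ref{3SFE}, so by the embedding above they lie in $L^p(\Omega;o^2)$. The two remaining summands in (\ref{32}) are handled by the induction hypothesis combined with the $o^2$-multiplication estimate. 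Assembling the pieces through (\ref{32}) yields the $L^p(o^2)$-bound for $(n^m\Delta_{2n+1}^m(x))_{n\geq 1}$, and the identity $n^m\Delta_n^m=n^m\Delta_{2n+1}^m-B_n$ together with the $L^p(o^2)$-bound on $(B_n)$ then gives (\ref{4Delta}); the case $m=0$ is (\ref{4Tn}).

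The main technical subtlety is the $o^2$-multiplication lemma, since $o^2$ does not embed into $\ell^\infty$ in general. However, the multipliers that actually arise in the induction are all of the form $1+\frac{1}{n}$ or $\bigl(\frac{n}{n+1}\bigr)^{m-1}$, whose $v^1$-differences decay like $1/n^2$ and which are uniformly close to $1$ for large $n$; this makes the $L^p$-integrated argument sketched above routine, and no genuinely new analytic input is required beyond the square-function estimate already used in Theorem \ref{3Main}.
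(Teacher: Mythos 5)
Your two preliminary observations (the contractive embedding $v^2\subset o^2$ and an $o^2$-analogue of Lemma \ref{3Mult}) are correct and are indeed needed, but your induction step has a genuine gap at the point where you say that ``the two remaining summands in (\ref{32}) are handled by the induction hypothesis combined with the $o^2$-multiplication estimate.'' In the proof of Theorem \ref{3Main}, the passage from the induction hypothesis $\bignorm{(n^{m-1}\Delta_n^{m-1}(x))_{n\geq1}}_{L^p(v^q)}\lesssim\norm{x}_p$ to (\ref{3Induction1}) and (\ref{3Induction2}) uses, besides Lemma \ref{3Mult}, the fact that the $v^q$-norm is monotone under passing to subsequences: the $v^q$-norms of $(a_{n+1})_n$ and $(a_{2n+1})_n$ are at most that of $(a_n)_n$, because the supremum defining $\norm{\cdot}_{v^q}$ runs over all increasing index sequences. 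This is precisely what fails for $o^2$: the space is tied to the fixed sequence $(n_k)$, and the $o^2$-norm of the dilated subsequence $(a_{2n+1})_n$ relative to $(n_k)$ is an oscillation norm of $(a_n)_n$ relative to $(2n_k+1)_k$, which is not dominated by the $o^2$-norm relative to $(n_k)$. For instance, if $(n_k)$ contains a block $[M,2M]$ while the range $[2M,4M]$ is covered by unit blocks, then a staircase rising by $\varepsilon$ at each step of $[2M,4M]$ has $o^2$-norm about $\sqrt{M}\,\varepsilon$ relative to $(n_k)$, whereas its oscillation on the dilated block $[2M+1,4M+1]$ is about $M\varepsilon$; so no bound with a constant independent of $M$ can hold, and for a fixed $(n_k)$ containing such configurations at all scales the dilation map is unbounded on $o^2$. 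Consequently the term $n^{m-1}\Delta_{2n+1}^{m-1}(x)$ in (\ref{32}) is not controlled by your induction hypothesis plus a multiplier lemma (the shift $n\mapsto n+1$ is harmless; the dilation $n\mapsto 2n+1$ is not), and the induction does not close as stated.

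The paper's proof repairs exactly this point by carrying three properties through the induction: (i)$_m$, (ii)$_m$ and the dilated bound (iii)$_m$ for $\bigl(n^m\Delta_{2n+1}^m(x)\bigr)_n$ in $L^p(o^2)$. There, (iii)$_m$ is not deduced from (i)$_m$ by passing to a subsequence; it comes directly out of the decomposition (\ref{32}) together with (\ref{3A}), (\ref{3B}) and $v^2\subset o^2$, while the base case (iii)$_{-1}$ is obtained by applying Theorem \ref{41} a second time, for the oscillation space built on the sequence $(2n_k+1)_k$ (legitimate because the constant $C_{p,2}$ from \cite{JKRW} does not depend on the defining sequence); the equivalence of (i)$_m$ and (ii)$_m$ is then a consequence of Proposition \ref{3SFE}. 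If you add the dilated estimate (iii)$_{m-1}$ to your induction hypothesis --- equivalently, prove the statement simultaneously for the defining sequences $(n_k)$ and $(2n_k+1)$ --- your argument goes through; without it, the quoted step is unjustified.
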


\begin{proof}
The proof is a variant of the one written for Theorem \ref{3Main}, let us explain this briefly.
We use the notation from Section 4. For any $m\geq -1$, consider the following three properties:

\begin{itemize}
\item [(i)$_{m}$] $\ \bignorm{\bigl(n^{m}\Delta_n^{m}(x)\bigr)_{n\geq 1}}_{L^p(o^2)}\,\lesssim\,\norm{x}_p$;

\smallskip
\item [(ii)$_{m}$] $\ \bignorm{\bigl(n^{m}\Delta_{n+1}^{m} (x)\bigr)_{n\geq 1}}_{L^p(o^2)}\,\lesssim\,\norm{x}_p$;

\smallskip
\item [(iii)$_{m}$] $\ \bignorm{\bigl(n^{m}\Delta_{2n+1}^{m} (x)\bigr)_{n\geq 1}}_{L^p(o^2)}\,\lesssim\,\norm{x}_p$.
\end{itemize}
Property (i)$_{m}$ is the result we wish to prove. Our strategy is to
show, by induction, that these three estimates hold true.

First, it is easy to deduce from Proposition \ref{3SFE} that  (i)$_{m}$ and (ii)$_{m}$ are equivalent.
Second  it follows from (\ref{3A}), (\ref{3B}) and (\ref{32}) that
(ii)$_{m-1}$ and (iii)$_{m-1}$ imply (iii)$_{m}$ and (i)$_{m}$. Indeed $v^2\subset o^2$
and $n^m\Delta_n^m = n^m\Delta_{2n+1}^m -B_n$. Hence it suffices to show 
(i)$_{-1}$ and (iii)$_{-1}$. This is obtained by applying Theorem \ref{41} twice, the first time for
the $o^2$-space associated with the sequence $(n_k)_{k\geq 1}$, the second time for 
$o^2$-space associated with the sequence $(2n_k+1)_{k\geq 1}$.
\end{proof}

There are also $o^2$-versions of Corollary \ref{2Continuous} and
Corollary \ref{3Continuous}, whose statements are left to the reader.

\bigskip
{\it 5.3. Jump functions}

\smallskip

It is well known that variational inequalities for a sequence of operators have consequences
in terms of jump functions. For any $\tau>0$ and any sequence
$a= (a_n)_{n\geq 0}$ of complex numbers, let $N(a,\tau)$ denote
the number of $\tau$-jumps of $a$, defined as
the supremum of all integers $N\geq 0$ for which there exist integers
$$
0\leq n_1 <m_1\leq n_2 < m_2\leq \cdots\leq n_N < m_N,
$$
such that $\vert a_{m_k}-a_{n_k}\vert >\tau$ for each $k=1,\ldots,N$.
It is clear that for any $1\leq q<\infty$,
$$
\tau^q N(a,\tau)\,\leq\,\norm{a}_{v^q}^q.
$$
Combining with Theorem \ref{2Main} and Theorem \ref{3Main}, we immediately
obtain (as in \cite[Thm. 3.15]{JR}) the following jump estimates.

\begin{corollary}\label{3Jumps}
Consider $1<p<\infty$ and $2<q<\infty$. Let $T\colon L^p(\Omega,\mu)\to L^p(\Omega,\mu)$ be a contractively
regular operator.
\begin{itemize}
\item [(1)] We have an estimate
$$
\Bignorm{\lambda\mapsto N\Bigl(\bigl([M_n(T)x](\lambda)\bigr)_{n\geq 0}\, ,\, \tau\Bigr)^{\frac{1}{q}}}_p\,\lesssim \, \frac{\norm{x}_p}{\tau}\,,
$$
and for any $K>0$, we also have
$$
\mu\Bigl\{\lambda\in\Omega\,\Big\vert\,
N\Bigl(\bigl([M_n(T)x](\lambda)\bigr)_{n\geq 0}\, ,\, \tau\Bigr)\,>\,
K\Bigr\}\,\lesssim\,
\frac{\norm{x}_p^p}{\tau^p K^{\frac{p}{q}}}\,,
$$
\item [(2)] Assume moreover that $T$ is analytic. Then we have similar estimates
$$
\Bignorm{\lambda\mapsto N\Bigl(\bigl([T^n (x)](\lambda)\bigr)_{n\geq 0}
\, ,\, \tau\Bigr)^{\frac{1}{q}}}_p\,\lesssim \, \frac{\norm{x}_p}{\tau}\,,
$$
and
$$
\mu\Bigl\{\lambda\in\Omega\,\Big\vert\,
N\Bigl(\bigl([ T^n (x)](\lambda)\bigr)_{n\geq 0}\, ,\, \tau\Bigr)\,>\,
K\Bigr\}\,\lesssim\,
\frac{\norm{x}_p^p}{\tau^p K^{\frac{p}{q}}}\,,
$$
Furthermore for any integer $m\geq 0$, similar results hold with
$n^m\Delta_n^m$ instead of $T^n$.
\end{itemize}
\end{corollary}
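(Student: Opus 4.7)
The plan is to deduce Corollary \ref{3Jumps} as an essentially formal consequence of the pointwise inequality $\tau^q N(a,\tau)\leq \norm{a}_{v^q}^q$ recorded just above the statement, combined with the already proved $L^p(v^q)$ variational estimates, namely Theorem \ref{2Main} for the averages $M_n(T)$ and Theorem \ref{3Main} (formulas (\ref{3Tn}) and (\ref{3Delta})) for $T^n$ and for $n^m\Delta_n^m$ under the analyticity assumption. No further semigroup or dilation machinery is needed; the two parts of the corollary will be proved by the same scheme, once for each variational estimate.

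For part (1), fix $x\in L^p(\Omega)$, $\tau>0$, and apply the pointwise bound to the sequence $a(\lambda)=\bigl([M_n(T)x](\lambda)\bigr)_{n\geq 0}$ for a.e.\ $\lambda\in\Omega$. Raising to the power $1/q$ and taking $L^p$ in $\lambda$,
$$
\tau\,\Bignorm{\lambda\mapsto N\bigl(a(\lambda),\tau\bigr)^{1/q}}_p
\,\leq\,\Bignorm{\lambda\mapsto \norm{a(\lambda)}_{v^q}}_p
\,=\,\bignorm{(M_n(T)x)_{n\geq 0}}_{L^p(v^q)}\,\lesssim\,\norm{x}_p,
$$
by Theorem \ref{2Main}, which is the first stated inequality. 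For the distributional bound, observe that the set $\{\lambda:N(a(\lambda),\tau)>K\}$ equals $\{\lambda:N(a(\lambda),\tau)^{1/q}>K^{1/q}\}$. A direct application of Markov's inequality to the $p$-th power of the function just estimated therefore yields
$$
\mu\bigl\{\lambda:N(a(\lambda),\tau)>K\bigr\}\,\leq\,
K^{-p/q}\,\Bignorm{\lambda\mapsto N(a(\lambda),\tau)^{1/q}}_p^{p}
\,\lesssim\,\frac{\norm{x}_p^p}{\tau^p K^{p/q}},
$$
which is the second assertion of (1).

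For part (2), the same two-step argument applies verbatim, substituting for $\bigl([M_n(T)x](\lambda)\bigr)_{n\geq 0}$ the sequence $\bigl([T^n(x)](\lambda)\bigr)_{n\geq 0}$ and invoking (\ref{3Tn}) in Theorem \ref{3Main}; the two stated inequalities follow. The final sentence of the corollary (the variant with $n^m\Delta_n^m$ in place of $T^n$) is obtained identically, using (\ref{3Delta}) instead of (\ref{3Tn}). There is no genuine obstacle in this proof: all the analytical work has been done in proving the variational estimates of Sections 3 and 4, and the jump bounds are extracted from them by a one-line pointwise inequality and one application of Markov's inequality.
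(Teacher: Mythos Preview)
Your proof is correct and matches the paper's approach exactly: the paper also derives the corollary directly from the pointwise inequality $\tau^q N(a,\tau)\le\norm{a}_{v^q}^q$ together with Theorems \ref{2Main} and \ref{3Main}, and then the distributional estimate is Markov's inequality applied to the $L^p$-bound just obtained.
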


Similar results for the continuous case can be deduced as well from Corollary \ref{2Continuous} and
Corollary \ref{3Continuous}.

\medskip
\section{Examples and applications.}

We will now exhibit various classes of operators or semigroups
to which our results from Section 4 and Section 5 apply. We 
focus on statements involving $q$-variation, although statements involving
the oscillation norm from the subsection 5.2 are also possible.

We start with the continuous case.
Let $(T_t)_{t\geq 0}$ be a strongly continuous semigroup on $L^2(\Omega)$ and assume that each
$T_t$ is an absolute contraction, that is,
\begin{equation}\label{3Diff1}
\norm{T_t(x)}_1\leq\norm{x}_1\qquad\hbox{and}\qquad
\norm{T_t(x)}_\infty\leq\norm{x}_\infty
\end{equation}
for any $x\in L^1(\Omega)+L^\infty(\Omega)$ and any $t>0$
(see Section 3). Thus for any $1<p<\infty$,
$(T_t)_{t\geq 0}$ extends to a strongly continuous semigroup of contractively regular operators.
It is well-known that $(T_t)_{t\geq 0}$ is a bounded analytic semigroup on $L^p(\Omega)$
for every $1<p<\infty$ if (and only if) it is a bounded analytic semigroup on $L^p(\Omega)$
for one $1<p<\infty$. Applying Corollary \ref{3Continuous} and Corollary \ref{74}, we derive the following.

\begin{corollary}\label{3Diff2}
Let  $(T_t)_{t\geq 0}$ be a bounded analytic semigroup on $L^2(\Omega)$ satisfying
(\ref{3Diff1}). Then it
satisfies estimates (\ref{3Cont1}) and (\ref{3Cont2}) for every $1<p<\infty$ and every $2<q<\infty$.
Moreover for any $x\in L^p(\Omega)$, $T_t(x)$ converges almost everywhere when $t\to 0^+$ and when 
$t\to\infty$.
\end{corollary}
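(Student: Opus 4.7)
My plan is to assemble the conclusion from three already-available ingredients: contractive regularity on every $L^p$, transfer of analyticity across $L^p$ spaces, and the earlier Corollaries \ref{3Continuous} and \ref{74}. The bulk of the work is just verifying that the hypotheses of Corollary \ref{3Continuous} are met for every $1<p<\infty$.

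First, I would check that each $T_t$ is contractively regular on $L^p(\Omega)$ for every $1<p<\infty$. The absolute-contraction assumption (\ref{3Diff1}) combined with the Riesz--Thorin interpolation theorem gives $\norm{T_t}_{L^p\to L^p}\leq 1$ for every $1\leq p\leq\infty$, and by the characterization recalled in Section 3 (the fact that an absolute contraction is automatically contractively regular on every $L^p$, $1<p<\infty$), one obtains $\norm{T_t}_r\leq 1$. Strong continuity of $(T_t)_{t\geq 0}$ on $L^p$ then follows by a standard density argument from strong continuity on $L^2$ together with the uniform boundedness $\norm{T_t}_p\leq 1$.

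Second, I would invoke the principle stated in the excerpt: a semigroup satisfying (\ref{3Diff1}) is bounded analytic on $L^p(\Omega)$ for every $1<p<\infty$ if and only if it is bounded analytic on $L^{p_0}(\Omega)$ for some single $1<p_0<\infty$. This is a classical consequence of Stein's interpolation theorem applied to the analytic extension $z\mapsto T_z$ on the sector $\Sigma_\omega$, using the endpoint bounds on $L^1$ and $L^\infty$ provided by (\ref{3Diff1}). Since the hypothesis gives bounded analyticity on $L^2(\Omega)$, one obtains bounded analyticity on every $L^p(\Omega)$ with $1<p<\infty$.

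With these two points in hand, $(T_t)_{t\geq 0}$ satisfies the hypotheses of Corollary \ref{3Continuous} on every $L^p(\Omega)$, $1<p<\infty$. Applying that corollary directly yields the variational estimates (\ref{3Cont1}) and (\ref{3Cont2}) for every $2<q<\infty$. Then applying the second Corollary \ref{74} (the version for bounded analytic semigroups of contractively regular operators) gives the almost-everywhere convergence of $[T_t(x)](\lambda)$ both as $t\to\infty$ and as $t\to 0^+$, finishing the proof.

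There is no genuine obstacle beyond the transfer-of-analyticity step, and since that step is explicitly quoted as well known in the excerpt immediately preceding the statement, the whole argument reduces to verifying contractive regularity and citing the earlier corollaries. The proof should therefore be only a few lines long.
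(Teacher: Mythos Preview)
Your proposal is correct and follows essentially the same route as the paper: the authors deduce the corollary directly from Corollary \ref{3Continuous} and Corollary \ref{74} after noting (in the paragraph preceding the statement) that absolute contractions are contractively regular on every $L^p$ and that bounded analyticity on one $L^{p_0}$ transfers to all $1<p<\infty$. Your added remarks on strong continuity via density and on the Stein interpolation justification for the analyticity transfer are reasonable elaborations of points the paper simply takes as known.
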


Note that if each $T_t\colon L^2(\Omega)\to L^p(\Omega)$ is selfadjoint, 
then $(T_t)_{t\geq 0}$ is a bounded analytic semigroup on $L^2(\Omega)$. Hence 
the above corollary applies to symmetric diffusion semigroups and extends \cite[Thm. 3.3]{JR}.

We now consider the so-called subordinated semigroups. Let $1<p<\infty$ and let
$(T_t)_{t\geq 0}$ be a strongly continuous bounded semigroup on $L^p(\Omega)$. Let
$A$ denote its infinitesimal generator and let $0<\alpha<1$. Then $-(-A)^\alpha$ generates a
bounded analytic semigroup $(T_{\alpha,t})_{t\geq 0}$ on $L^p(\Omega)$, and for any $t>0$,
there exists a continuous function $f_{\alpha,t}\colon(0,\infty)\to\Rdb\,$ such that
\begin{equation}\label{3Subord1}
\forall\, s>0,\quad f_{\alpha,t}(s)\geq 0; \qquad \int_{0}^{\infty} f_{\alpha,t}(s)\, ds\, =1;
\end{equation}
and
\begin{equation}\label{3Subord2}
T_{\alpha,t}(x)\,=\, \int_{0}^{\infty} f_{\alpha,t}(s)\, T_s(x)\, ds \, ,\qquad
x\in L^p(\Omega).
\end{equation}
See e.g. \cite[IX.11]{Y} for details and complements.

\begin{corollary}\label{3Subord}
Let $(T_t)_{t\geq 0}$ be a strongly continuous semigroup on $L^p(\Omega)$, with $1<p<\infty$,
and assume that $T_t\colon L^p(\Omega)\to L^p(\Omega)$ is contractively regular for any
$t\geq 0$. Then for any $0<\alpha<\infty$ and any $2<q<\infty$, we have an estimate
$$
\Bignorm{\lambda\mapsto \bignorm{\bigl([T_{\alpha,t}(x)](\lambda)\bigr)_{t> 0}}_{V^q}}_{p}\,\lesssim\,\norm{x}_p
$$
for the subordinated semigroup $T_{\alpha,t}=e^{-t(-A)^\alpha}$. Moreover for any $x\in L^p(\Omega)$,
$T_{\alpha,t}(x)$ converges almost everywhere when $t\to 0^+$ and when
$t\to\infty$.
\end{corollary}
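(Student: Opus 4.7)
The natural strategy is to apply Corollary \ref{3Continuous} and Corollary \ref{74} directly to the subordinated semigroup $(T_{\alpha,t})_{t\geq 0}$. Two hypotheses must be verified: that $(T_{\alpha,t})_{t\geq 0}$ is a bounded analytic semigroup on $L^p(\Omega)$, and that each $T_{\alpha,t}$ is contractively regular.

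The analyticity is a general Banach space fact coming from the theory of fractional powers and subordination. As recalled in \cite[IX.11]{Y}, when $-A$ generates a bounded strongly continuous semigroup on any Banach space and $0<\alpha<1$, the operator $-(-A)^{\alpha}$ generates a bounded analytic semigroup, whose action on $x\in L^p(\Omega)$ is represented by the Bochner integral (\ref{3Subord2}) against the nonnegative density $f_{\alpha,t}$ normalized by (\ref{3Subord1}).

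The contractive regularity of $T_{\alpha,t}$ is obtained from (\ref{3Subord2}) by Riemann-sum approximation. Any nonnegative linear combination $\sum_k c_k T_{s_k}$ satisfies $\norm{\sum_k c_k T_{s_k}}_r\leq \sum_k c_k$, since $\norm{T_{s_k}}_r\leq 1$ for every $k$ and $\norm{\,\cdot\,}_r$ is a norm. Choosing $c_k=f_{\alpha,t}(s_k)(s_{k+1}-s_k)$ over a suitable partition of $(0,\infty)$, the total weight $\sum_k c_k$ tends to $\int_0^\infty f_{\alpha,t}(s)\,ds=1$ by (\ref{3Subord1}), while the strong continuity of $(T_s)_{s\geq 0}$ forces the Riemann sums to converge to $T_{\alpha,t}(x)$ for every $x\in L^p(\Omega)$. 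Since the unit ball for $\norm{\,\cdot\,}_r$ is closed in the strong operator topology --- which follows from Fatou's lemma applied to the defining inequality $\bignorm{\sup_k\vert T(x_k)\vert}_p\leq\liminf_n\bignorm{\sup_k\vert T_n(x_k)\vert}_p$ --- we conclude $\norm{T_{\alpha,t}}_r\leq 1$.

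With these two facts in hand, Corollary \ref{3Continuous} immediately yields the variational $V^q$ estimate for $(T_{\alpha,t})_{t\geq 0}$, and Corollary \ref{74} provides the almost everywhere convergence of $T_{\alpha,t}(x)$ as $t\to 0^+$ and as $t\to\infty$. The main technical point is the verification of contractive regularity of $T_{\alpha,t}$; the analyticity ingredient is classical and independent of the contractively regular hypothesis.
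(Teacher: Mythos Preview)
Your proposal is correct and follows essentially the same route as the paper: verify that the subordinated semigroup is bounded analytic (classical, from \cite{Y}) and contractively regular, then invoke Corollary \ref{3Continuous} and Corollary \ref{74}. The paper simply records the inequality $\norm{T_{\alpha,t}}_r\leq \int_{0}^{\infty} f_{\alpha,t}(s)\,\norm{T_s}_r\, ds\leq 1$ in one line, while you spell out the Riemann-sum justification behind it; this is the same argument at a slightly different level of detail.
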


\begin{proof}
Let $0<\alpha<\infty$. It follows from (\ref{3Subord1}) and (\ref{3Subord2}) that
for any $t>0$,
$$
\norm{T_{\alpha,t}}_r\leq \int_{0}^{\infty} f_{\alpha,t}(s)\,\norm{T_s}_r\, ds\,\leq 1.
$$
Hence the result follows from Corollary \ref{3Continuous} and Corollary \ref{74}.
\end{proof}

We now turn to the discrete case and consider normal operators on $L^2$.

\begin{lemma}\label{3Normal1}
Let $H$ be a Hilbert space and let $T\in B(H)$ be a normal operator. Then
$T$ is an analytic power bounded operator if and only if it satisfies
(\ref{3Stolz2}).
\end{lemma}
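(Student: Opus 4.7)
The plan is to exploit the spectral theorem for normal operators together with the geometric content of condition \eqref{3Stolz2}. One direction, namely that an analytic power bounded operator satisfies \eqref{3Stolz2}, does not require normality at all: it is already contained in the discussion preceding the lemma, since the Ritt condition \eqref{3Ritt} implies \eqref{3Stolz1}, which in turn is equivalent to \eqref{3Stolz2}. So I only need to take care of the converse.

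For the converse, assume $T$ is normal and satisfies \eqref{3Stolz2}. The first thing I would do is use \eqref{3Stolz2} to locate the spectrum: by the equivalence of \eqref{3Stolz1} and \eqref{3Stolz2}, there is some $\gamma\in(0,\pi/2)$ with $\sigma(T)\subset B_\gamma$, and by definition $B_\gamma$ is the convex hull of $1$ and $\overline{D(0,\sin\gamma)}$, both of which sit inside $\overline{\Ddb}$. Hence $\sigma(T)\subset\overline{\Ddb}$. Now invoke the spectral theorem: there is a spectral measure $E$ on $\sigma(T)$ such that for any bounded Borel $f$ on $\sigma(T)$, $f(T)=\int f\,dE$ and
$$
\norm{f(T)}=\sup_{z\in\sigma(T)}\vert f(z)\vert.
$$
Applying this to $f(z)=z^n$ gives $\norm{T^n}=\sup_{z\in\sigma(T)}\vert z\vert^n\leq 1$, so $T$ is power bounded.

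Next I would establish the analyticity estimate $\sup_{n\geq 1} n\norm{T^n-T^{n-1}}<\infty$. Applying the spectral theorem to $f(z)=z^n-z^{n-1}=z^{n-1}(z-1)$, I get
$$
\norm{T^n-T^{n-1}}=\sup_{z\in\sigma(T)}\vert z\vert^{n-1}\vert z-1\vert.
$$
Now I use \eqref{3Stolz2} to bound $\vert z-1\vert\leq K(1-\vert z\vert)$ on $\sigma(T)$, which reduces the problem to estimating $\vert z\vert^{n-1}(1-\vert z\vert)$. This is a standard one-variable calculus bound: for any $r\in[0,1]$, the function $r\mapsto r^{n-1}(1-r)$ attains its maximum at $r=(n-1)/n$, where its value is at most $1/n$. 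Consequently $\norm{T^n-T^{n-1}}\leq K/n$, and $T$ is analytic.

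There isn't really a main obstacle here, since normality makes the whole argument a functional-calculus computation; the only small point to be careful about is the inclusion $B_\gamma\subset\overline{\Ddb}$, which is needed to get power boundedness from $\sigma(T)\subset B_\gamma$. The strength of the statement is precisely that in the normal case the purely spectral (geometric) condition \eqref{3Stolz2} is already enough to ensure analyticity in the operator-theoretic sense, whereas for non-normal operators the Ritt resolvent condition \eqref{3Ritt} is genuinely stronger than \eqref{3Stolz2}.
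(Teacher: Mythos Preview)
Your proof is correct and follows essentially the same route as the paper's: one direction is the general implication \eqref{3Ritt}$\Rightarrow$\eqref{3Stolz2}, and for the converse you use the spectral theorem to reduce to the scalar estimate $\sup_{z\in\sigma(T)} n\vert z\vert^{n-1}\vert 1-z\vert\leq K\sup_{r\in[0,1]} n r^{n-1}(1-r)$, exactly as the paper does. The only cosmetic difference is that you bound the calculus maximum by $1/n$ whereas the paper records its exact value $\bigl(\tfrac{n-1}{n}\bigr)^{n-1}$.
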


\begin{proof} The `only if' part holds for any operator, as discussed at the beginning of Section 4.
Conversely, assume that a normal operator $T\colon H\to H$ satisfies
(\ref{3Stolz2}).
Applying the Spectral Theorem, we deduce that $T$ is a contraction and that
for some  constant $K>0$, we have
\begin{align*}
n\norm{T^n-T^{n-1}}\, & =\,\sup_{z\in\sigma(T)} n\vert z^n -z^{n-1}\vert
\, =\,\sup_{z\in\sigma(T)} n\vert z\vert^{n-1}\,\vert 1-z\vert\,\\ & \leq\,
K\sup_{r\in[0,1]} n(r^{n-1} -r^n)\,=\,K \Bigl(\frac{n-1}{n}\Bigr)^{n-1}
\end{align*}
for any $n\geq 1$. Hence the sequence $\bigl(n(T^n-T^{n-1})\bigr)_{n\geq 1}$ is bounded.
\end{proof}

The following is a straightforward consequence of the above lemma, Theorem \ref{3Main} and Corollary \ref{72}.

\begin{corollary}\label{3Normal2} Let
$T\colon L^2(\Omega)\to L^2(\Omega)$ be a contractively regular normal operator satisfying
(\ref{3Stolz2}). Then it satisfies an estimate
$$
\bignorm{\bigl(T^n (x)\bigr)_{n\geq 0}}_{L^2(v^q)}\,\lesssim\,\norm{x}_2,\qquad x\in L^2(\Omega)
$$
for any $2<q<\infty$. Moreover for any $x\in L^2(\Omega)$, $T^n(x)$ converges almost everywhere 
when $n\to\infty$.
\end{corollary}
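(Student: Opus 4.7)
The plan is to check that the hypotheses of Theorem \ref{3Main} and Corollary \ref{72} are satisfied for $T$, after which both conclusions follow immediately.

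First, I would invoke Lemma \ref{3Normal1}. Since $T\colon L^2(\Omega)\to L^2(\Omega)$ is contractively regular, we have $\|T\|\le \|T\|_r\le 1$, so $T$ is in particular a contraction and is power bounded. Assuming (\ref{3Stolz2}), Lemma \ref{3Normal1} applied to the normal operator $T$ on the Hilbert space $H=L^2(\Omega)$ produces the analyticity estimate $\sup_{n\geq 1} n\|T^n - T^{n-1}\| <\infty$. Thus $T$ meets all hypotheses of Theorem \ref{3Main} in the case $p=2$: it is contractively regular and analytic.

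Next, I would apply Theorem \ref{3Main} with $p=2$ and the given $q\in (2,\infty)$. This immediately yields the desired inequality
$$
\bignorm{\bigl(T^n (x)\bigr)_{n\geq 0}}_{L^2(v^q)}\,\lesssim\,\norm{x}_2,\qquad x\in L^2(\Omega).
$$
No further work on the variational estimate is required; in particular, the implicit constant depends on $q$ and on the analyticity constant of $T$ (which by Lemma \ref{3Normal1} is controlled by the constant $K$ of (\ref{3Stolz2})).

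Finally, for the almost everywhere convergence assertion, I would invoke Corollary \ref{72}, whose hypotheses we have just verified. It gives $[T^n(x)](\lambda)\to [P_T(x)](\lambda)$ for almost every $\lambda\in\Omega$ and every $x\in L^2(\Omega)$, which is exactly the claim. There is no serious obstacle in this proof: the only non-formal step is the passage from the spectral condition (\ref{3Stolz2}) to the discrete analyticity bound, and that step is already isolated in Lemma \ref{3Normal1} via the elementary inequality $n(r^{n-1}-r^n)\le((n-1)/n)^{n-1}$ coupled with the Spectral Theorem.
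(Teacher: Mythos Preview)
Your proof is correct and follows exactly the route indicated in the paper: the statement is recorded there as ``a straightforward consequence of the above lemma, Theorem \ref{3Main} and Corollary \ref{72}'', and you have spelled out precisely that deduction. There is nothing to add.
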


The following is a discrete analog of Corollary \ref{3Diff2}.

\begin{corollary}\label{3Diff3}
Let $T\colon L^1(\Omega)+L^\infty(\Omega)\to L^1(\Omega)+L^\infty(\Omega)$ be an absolute
contraction and assume that $T\colon L^2(\Omega)\to L^2(\Omega)$ is analytic. Then $T$
satisfies estimates (\ref{3Tn}) and (\ref{3Delta}) for every $1<p<\infty$ and every $2<q<\infty$.
Moreover for any $x\in L^p(\Omega)$, $T^n(x)$ converges almost everywhere
when $n\to\infty$.
\end{corollary}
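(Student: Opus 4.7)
The plan is to reduce the statement to Theorem~\ref{3Main} and Corollary~\ref{72}, both of which require the analyticity of $T$ on the relevant $L^p$-space. Since we are given analyticity only on $L^2$, the heart of the matter is an extrapolation step: showing that, for an absolute contraction, analyticity on $L^{p_0}$ for some $1<p_0<\infty$ implies analyticity on every $L^p$ with $1<p<\infty$. This is the discrete counterpart of the semigroup extrapolation silently invoked in the proof of Corollary~\ref{3Diff2}.

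First I would record that, since $T$ is an absolute contraction, it induces a contractively regular operator on $L^p(\Omega)$ for every $1<p<\infty$, hence in particular a power-bounded operator on each such $L^p$. The nontrivial point is the analyticity transfer, for which I would consider the Poisson-type semigroup
$$
S_t\,=\,e^{-t(I-T)}\,=\,e^{-t}\sum_{n\geq 0}\frac{t^n}{n!}\,T^n,\qquad t\geq 0.
$$
The identity $\sum_{n\geq 0}e^{-t}t^n/n!=1$ shows that each $S_t$ is a convex combination of the absolute contractions $T^n$, so $(S_t)_{t\geq 0}$ is itself a semigroup of absolute contractions. On $L^2(\Omega)$, the analyticity of $T$ places $\sigma(T)$ inside a Stolz domain $B_\gamma$ by (\ref{3Stolz1}), so $\sigma(I-T)\subset 1-B_\gamma$ lies in a closed sector around $(0,\infty)$ of half-angle strictly less than $\pi/2$; hence $(S_t)_{t\geq 0}$ is a bounded analytic semigroup on $L^2(\Omega)$. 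By the semigroup extrapolation used in Corollary~\ref{3Diff2}, $(S_t)_{t\geq 0}$ is then bounded analytic on $L^p(\Omega)$ for every $1<p<\infty$, which provides the sectoriality of $I-T$ on $L^p$ with angle strictly less than $\pi/2$. Combined with the power boundedness of $T$ on $L^p$, this yields the Ritt condition (\ref{3Ritt}) on $L^p(\Omega)$, equivalently the $L^p$-analyticity of $T$.

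Once $T$ is known to be analytic on every $L^p(\Omega)$ with $1<p<\infty$, the estimates (\ref{3Tn}) and (\ref{3Delta}) are immediate from Theorem~\ref{3Main}, applied on each such $L^p(\Omega)$. The a.e.\ convergence $T^n(x)\to P_T(x)$ for any $x\in L^p(\Omega)$ then follows directly from Corollary~\ref{72} applied to $T$ on $L^p$.

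The main obstacle is the analyticity transfer. The Poisson-semigroup reduction gives sectoriality of $I-T$ on $L^p$ for free, but the last implication --- passing from sectoriality of $I-T$ with angle less than $\pi/2$ together with power boundedness of $T$ to the Ritt condition (\ref{3Ritt}) on $L^p$ --- still requires an explicit resolvent analysis: one would split $\{|z|>1\}$ into a neighborhood of $1$, where $(zI-T)^{-1}=((z-1)I+(I-T))^{-1}$ is controlled by sectoriality of $I-T$, and a complementary region bounded away from $1$, where the Neumann series provides the bound from power boundedness. It is worth noting that a naive Riesz--Thorin interpolation of $\|T^n-T^{n-1}\|$ between $L^2$ and $L^\infty$ only yields a decay of order $n^{-2/p}$ on $L^p$, strictly weaker than the required $n^{-1}$; this is precisely why the structural detour through the Poisson semigroup is needed.
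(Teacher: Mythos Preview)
Your reduction is exactly the paper's: establish analyticity of $T$ on every $L^p$, then invoke Theorem~\ref{3Main} and Corollary~\ref{72}. The paper obtains the analyticity transfer by citing Blunck's extrapolation theorem \cite[Thm.~1.1]{Bl2} directly. Your Poisson--semigroup route is in fact the method behind that theorem, so the two approaches ultimately coincide; you are reconstructing the cited result rather than avoiding it. Your closing remark on the inadequacy of naive Riesz--Thorin interpolation is well taken.

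Two steps in your reconstruction need attention. First, on $L^2$ the spectral inclusion $\sigma(T)\subset B_\gamma$ alone does not make $(S_t)$ bounded analytic (a Jordan block with $\sigma(T)=\{1\}$ already fails); one must invoke the full Ritt resolvent estimate (\ref{3Ritt}) to obtain sectoriality of $I-T$. Second --- and this is the real gap --- your resolvent analysis for the return implication on $L^p$ does not close. The complementary region $\{|z|>1,\ |z-1|\ge\delta\}$, while bounded away from $1$, is \emph{not} bounded away from the unit circle, and the bound $\|(z-1)(zI-T)^{-1}\|\le M\,|z-1|/(|z|-1)$ coming from power boundedness via the Neumann series blows up there (take $z=(1+\varepsilon)e^{i\theta}$ with $\theta\neq 0$ fixed and $\varepsilon\to 0^+$). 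Passing from ``$(S_t)$ bounded analytic on $L^p$ and $T$ power bounded'' to ``$T$ Ritt on $L^p$'' is precisely the substantive half of Blunck's equivalence; it requires a finer argument than the one you sketch, for instance exploiting the analytic extension of $(S_t)$ to a sector to push the Laplace representation of the resolvent beyond the right half-plane.
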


\begin{proof} If $T\colon L^2(\Omega)\to L^2(\Omega)$ is analytic, then
$T\colon L^p(\Omega)\to L^p(\Omega)$ is analytic as well for any $1<p<\infty$, by
\cite[Thm. 1.1]{Bl2}. Hence the result follows from Theorem \ref{3Main} and Corollary \ref{72}.
\end{proof}

Of course the above corollary applies if $T\colon L^2(\Omega)\to L^2(\Omega)$ is a
positive selfadjoint operator, more generally if $T$
is normal and satisfies (\ref{3Stolz2}). As a consequence, we extend
the main result of \cite{JR}, as follows. The next statement solves a problem raised
in the latter paper.

\begin{corollary} Let $G$ be a locally compact abelian group and let $L^p(G)$ denote the
corresponding $L^p$-spaces with respect to a Haar measure. Let $\nu$ be a probability
measure on $G$ and let
$T\colon L^1(G)+L^\infty(G)\to L^1(G)+L^\infty(G)$ be the associated convolution
operator,
$$
T(x)\,=\, \nu*x.
$$
Assume that there exists a constant $K>0$ such that
$\vert 1-\widehat{\nu}(s)\vert\leq K (1-\vert\widehat{\nu}(s)\vert)\,$ for any $s\in\widehat{G}$.
Then we have an estimate
$$
\bignorm{\bigl(T^n (x)\bigr)_{n\geq 0}}_{L^p(v^q)}\,\lesssim\,\norm{x}_p,\qquad x\in L^p(\Omega),
$$
for any $1<p<\infty$ and any $2<q<\infty$.
\end{corollary}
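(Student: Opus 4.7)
The plan is to deduce this result by verifying the hypotheses of Corollary \ref{3Diff3}, so what needs to be shown is that $T$ is an absolute contraction and that $T$ is analytic on $L^2(G)$.

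First, since $\nu$ is a probability measure on $G$, Young's convolution inequality (or a direct estimate) gives $\norm{\nu*x}_1\leq\norm{x}_1$ and $\norm{\nu*x}_\infty\leq\norm{x}_\infty$, so $T$ is an absolute contraction in the sense of \eqref{3Diff1}. This step is routine.

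Next I would analyze $T$ on $L^2(G)$ via the Fourier transform. The Plancherel/Fourier transform unitarily identifies $L^2(G)$ with $L^2(\widehat{G})$ and conjugates $T$ to the multiplication operator $M_{\widehat{\nu}}$ by the bounded continuous function $\widehat{\nu}$ on $\widehat{G}$. In particular $T$ is normal on $L^2(G)$, and its spectrum $\sigma(T)$ coincides with the essential range of $\widehat{\nu}$, which is contained in $\overline{\Ddb}$ because $\norm{\widehat{\nu}}_{\infty}\leq\nu(G)=1$. The assumption $|1-\widehat{\nu}(s)|\leq K(1-|\widehat{\nu}(s)|)$ valid for every $s\in\widehat{G}$ therefore translates directly into the Stolz-type condition \eqref{3Stolz2} on $\sigma(T)$. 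By Lemma \ref{3Normal1}, $T$ is then a power bounded analytic operator on $L^2(G)$.

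Having established that $T$ is an absolute contraction analytic on $L^2(G)$, I would invoke Corollary \ref{3Diff3} directly: this gives estimate \eqref{3Tn} for $T$ on $L^p(G)$ for every $1<p<\infty$ and every $2<q<\infty$, which is exactly the conclusion sought. No step here is technically demanding; the only point that requires a small amount of care is the identification of $\sigma(T)$ with the essential range of $\widehat{\nu}$, which is a standard consequence of the spectral theorem for multiplication operators applied to the Fourier picture.
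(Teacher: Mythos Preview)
Your proposal is correct and follows essentially the same route as the paper's proof: identify the $L^2$-spectrum of $T$ with the essential range of $\widehat{\nu}$ via Fourier analysis, use the hypothesis to verify the Stolz condition \eqref{3Stolz2}, invoke Lemma \ref{3Normal1} to obtain analyticity on $L^2$, and then apply Corollary \ref{3Diff3}. The paper's version is terser but the logical skeleton is identical.
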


\begin{proof} Regard $T$ as an $L^2$-operator. By Fourier analysis, its spectrum is equal to the
essential range of $\widehat{\nu}$. It therefore follows from the assumption and Lemma \ref{3Normal1}
that the operator $T\colon L^2(G)\to L^2(G)$ is analytic. Hence $T$ satisfies the assumptions of
Corollary \ref{3Diff3}, which yields the result.
\end{proof}

\begin{remark}
For an operator $T\in B(L^2(\Omega))$,
let $W(T)=\{\langle T(x),x\rangle\, :\, \norm{x}_2=1\}$ denote the numerical range.
We recall that $W(T)$ is a compact convex set and that $\sigma(T)\subset W(T)$.
Assume that there exists $\gamma\in(0,\frac{\pi}{2})$ such that $W(T)\subset B_\gamma$
(which is a stronger condition than (\ref{3Stolz1}) or (\ref{3Stolz2})). Then according
to \cite{DD}, there exists a constant $C>0$ such that
$$
\norm{f(T)}\leq\,C\sup\bigl\{\vert f(z)\vert\, :\, z\in B_\gamma\bigr\}
$$
for any polynomial $f$. Arguing as in the proof of Lemma \ref{3Normal1}, we deduce
that $T$ is an analytic power bounded operator.

Consequently if $T\colon L^2(\Omega)\to L^2(\Omega)$ is contractively regular and
$W(T)\subset B_\gamma$ for some $\gamma\in(0,\frac{\pi}{2})$, then it satisfies
(\ref{3Tn}).
\end{remark}

\end{document}